\date{}
\theoremstyle{plain}
      \newtheorem{theorem}{Theorem}[section]
      \newtheorem{lemma}[theorem]{Lemma}
            \newtheorem{claim}[theorem]{Claim}
      \newtheorem{problem}[theorem]{Problem}
      \newtheorem{corollary}[theorem]{Corollary}
      \newtheorem{proposition}[theorem]{Proposition}
\theoremstyle{definition}
\theoremstyle{remark}
      \newtheorem{remark}[theorem]{Remark}
\DeclareMathOperator{\pr}{Pr}
\DeclareMathOperator{\area}{area}
\title{Disjoint faces in simple drawings of the complete graph and topological Heilbronn problems}
\begin{document}

\author{Alfredo Hubard\thanks{LIGM, Universit\'e Gustave Eiffel, CNRS, ESIEE Paris, F-77454 Marne-la-Vall\'ee, France. Email: {\tt alfredo.hubard@univ-eiffel.fr}}
\and Andrew Suk\thanks{Department of Mathematics, University of California at San Diego, La Jolla, CA, 92093 USA. Supported by NSF CAREER award DMS-1800746 and  NSF award DMS-1952786. Email: {\tt asuk@ucsd.edu}.}}

\maketitle

\begin{abstract}
 Given a complete simple topological graph $G$, a $k$-face generated by $G$ is the open bounded region enclosed by the edges of a non-self-intersecting $k$-cycle in $G$.  Interestingly, there are complete simple topological graphs with the property that every odd face it generates contains the origin.  In this paper, we show that every complete $n$-vertex simple topological graph generates at least $\Omega(n^{1/3})$ pairwise disjoint 4-faces.  As an immediate corollary, every complete simple topological graph on $n$ vertices drawn in the unit square generates a 4-face with area at most $O(n^{-1/3})$. Finally, we investigate a $\mathbb Z_2$ variant of Heilbronn triangle problem.  

\end{abstract}

\section{Introduction}

A \emph{topological graph} is a graph drawn in the plane such that its vertices are represented by points and its edges are represented by non-self-intersecting arcs connecting the corresponding points. The arcs are not allowed to pass through vertices different from their endpoints, and if two edges share an interior point, then they must properly cross at that point in common. A topological graph is \emph{simple} if every pair of its edges intersect at most once, either at a common endpoint or at a proper crossing point.  If the edges are drawn as straight-line segments, then the graph is said to be \emph{geometric}. Simple topological graphs have been extensively studied \cite{Ai,SZ,PST,FV,H}, and are sometimes referred to as simple drawings \cite{H,Ar}.  In this paper, we study the crossing pattern of the faces generated by a simple topological graph.

 If $\gamma\subset \mathbb R^2$ is a {\em Jordan curve} (i.e.~non-self-intersecting closed curve), then by the Jordan curve theorem, $\mathbb R^2 \setminus \gamma$ has two connected components one of which is bounded. For any Jordan curve $\gamma\subset \mathbb{R}^2$, we refer to the bounded open region of $\mathbb R^2 \setminus \gamma$ given by the Jordan curve theorem as the {\em face inside} of $\gamma$. We refer to the {\em area} of $\gamma$ as the area of the face inside of $\gamma$, which we denote by $\area(\gamma)$.

 It is known that every complete simple topological graph $G$ of $n$ vertices contains many non-self-intersecting $k$-cycles, for $k=(\log n)^{1/4 - o(1)} $ (e.g.~see \cite{PST,SZ,PR,MT}). A \emph{$k$-face} generated by $G$ is the face inside of a non-self-intersecting $k$-cycle in $G$.  For simplicity, we say that a $k$-face is \emph{in} $G$, if $G$ generates it, and we call it an \emph{odd} (even) face if $k$ is odd (even).  Let us remark that a $k$-face in $G$ may contain other vertices and edges from $G$.  Moreover, notice that if $G$ is simple then every 3-cycle in $G$ must be non-self-intersecting, so for convenience, we call 3-faces \emph{triangles}.  

Surprisingly, one cannot guarantee two disjoint $3$-faces in complete simple topological graphs.  In the next section, we will show that the well-known construction due to Harborth and Mengerson \cite{HM}, known as the twisted graph, shows the following.

 \begin{proposition}\label{twisted}
For every $n \geq 1$, there exists a complete $n$-vertex simple topological graph such that every odd face it generates contains the origin.

 \end{proposition}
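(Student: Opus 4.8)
The plan is to produce one drawing that works for all $n\ge 1$ — the twisted drawing $T_n$ of $K_n$ — to fix a point $o$ for it (which we put at the origin), and then to control the winding number of every cycle of $T_n$ around $o$ by a parity argument. Recall that $T_n$ can be drawn as follows: place $v_1,\dots,v_n$ consecutively along a tiny circular arc centred at $o$, and route each edge $v_iv_j$ with $i<j$ as a simple arc from $v_i$ to $v_j$ that makes one full counterclockwise turn around $o$, the arcs being nested in the canonical ``twisted'' fashion so that any two of them meet at most once. This is the classical Harborth--Mengerson construction, so $T_n$ is a complete simple topological graph, and we may take $o$ to lie on no edge.

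The main step is bookkeeping with winding numbers. For an arc $\sigma$ from $v_a$ to $v_b$ avoiding $o$, let $A(\sigma)\in\RR$ be the net change of the angle around $o$ along $\sigma$; reversing $\sigma$ negates $A$, and since $\sigma$ runs from $v_a$ to $v_b$ we have $A(\sigma)\equiv\beta_b-\beta_a\pmod{2\pi}$, where $\beta_v\in\RR$ is a fixed representative of the angular coordinate of $v$ around $o$. Hence $A(\sigma)=(\beta_b-\beta_a)+2\pi\,m(\sigma)$ for a unique integer $m(\sigma)$, and $m$ changes sign under reversal. Writing $m(v_av_b)$ for the arc of $T_n$ directed from $v_a$ to $v_b$, the fact that every edge of $T_n$ winds once around $o$ (and that the $\beta_v$ may be taken arbitrarily close together) gives $m(v_iv_j)=1$ whenever $i<j$, so $m(v_av_b)\in\{+1,-1\}$ — an odd integer — for \emph{every} ordered pair of distinct vertices. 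Now for any cycle $C=v_{i_1}v_{i_2}\cdots v_{i_k}v_{i_1}$ in $T_n$, summing $A$ over its edges and telescoping the $\beta$-terms yields
\[
\operatorname{wind}(C,o)=\frac{1}{2\pi}\sum_{t=1}^{k}A\bigl(v_{i_t}v_{i_{t+1}}\bigr)=\sum_{t=1}^{k}m\bigl(v_{i_t}v_{i_{t+1}}\bigr).
\]
If $k$ is odd, this is a sum of an odd number of odd integers, hence odd, hence nonzero; by the Jordan curve theorem a point with nonzero winding number about a Jordan curve lies in the bounded region it encloses, so $o$ lies in the face inside $C$. Since every odd face of $T_n$ is the face inside of such a non-self-intersecting odd cycle, this proves the proposition. (The argument genuinely uses oddness: for even $k$ the sum $\sum_t m$ can vanish, which is precisely why one cannot force all triangles through a common point and why the quantitative results below concern $4$-faces.)

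I do not expect a deep obstacle here — the statement becomes a two-line computation once the right viewpoint is fixed. The one point requiring care in the write-up is to pin down exactly the two features of the twisted drawing that the argument consumes: that it is simple, and that every edge winds an odd number of times around a common point $o$. This can be done either by citing \cite{HM}, or by making the explicit spiralling construction above precise and checking directly that two such arcs cross at most once (which is where one would lift to the universal cover of $\RR^2\setminus\{o\}$ and verify that the lifted arcs, together with their $2\pi\ZZ$-translates, are pairwise single-crossing).
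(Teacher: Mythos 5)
Your proof is correct, and structurally it is the same parity argument as the paper's: fix a distinguished point, note that each edge of the twisted drawing contributes an odd count relative to that point, and conclude that an odd cycle has odd total count and hence encloses the point. The difference is one of bookkeeping. The paper places the vertices at $(1,0),\dots,(n,0)$, takes $p=(n+1,0)$, and observes that a ray from $p$ just above the $x$-axis crosses each edge of the drawing exactly once; it then invokes a mod-$2$ Jordan curve lemma (Claim 2.2 in the paper) to convert the odd crossing count into membership in the bounded face. You instead centre the drawing at $o$, route each edge so that it winds exactly once around $o$, and sum integer winding numbers over the cycle, using that the winding number of a Jordan curve about an interior point is $\pm 1$. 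The two are the same computation — your $\sum_t m(v_{i_t}v_{i_{t+1}})$ reduced mod $2$ is exactly the paper's crossing parity — but your integer version makes the ``odd $\times$ odd $=$ odd'' step especially transparent, at the cost of having to pin down the more refined assertion that each edge winds exactly $+1$ (rather than merely an odd number of times) in a spiral redrawing of the twisted graph. You correctly flag that the one point needing care is verifying simplicity of the spiral drawing; the paper sidesteps this by working with the standard drawing from Harborth–Mengersen and leaving simplicity as a cited fact, which is the lower-overhead choice. Either way the argument is sound.
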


\noindent See Figure 1.  However, the main result in this paper shows that we can guarantee many pairwise disjoint 4-faces.

\begin{figure}\label{figtwist}
\centering
\rotatebox{-45}{
\begin{tikzpicture}[scale=0.5]
\tikzstyle{every node}=[circle, draw, fill=black!75,inner sep=0pt, minimum width=4pt]
\foreach \i in {0,...,3}
	{
	\draw [red, thick] plot [smooth]  coordinates{(0,0)(\i/2 ,10-\i/2-.1 )(9.9-\i/2 ,10-\i/2)(9.7-\i/2 ,\i )(\i/2 +1/2,\i/2 +1/2)};
	\ifthenelse{\i>0} {\draw [blue,thick] plot  [smooth] coordinates{(1/2,1/2)(\i*.63+4/3 ,10-\i*2/3 )(10-\i*2/3 ,10-\i*2/3)(9.9-\i*2/3 ,\i*4/3 )(\i/2 +1/2,\i/2 +1/2)}}{};
	\ifthenelse{\i>1} {\draw [green,thick] plot [smooth]  coordinates{(1,1)(\i*9/12+2.3 ,9.9-\i*2/3 )(9.9-\i*2/3 ,9.9-\i*2/3)(9.9-\i*2/3  ,\i*4/3+.3 )(\i/2 +1/2,\i/2 +1/2)}}{};

	}
	\draw [black,thick] plot [smooth]  coordinates{(1.5,1.5)(5.3, 7.7 )(7.7,7.7)(7.7,4.5)(2,2)};
	\foreach \x in {0,...,4}
	{\node at (\x/2,\x/2){};
	}

\end{tikzpicture}}
\caption{The complete twisted graph on 5 vertices.}
\end{figure}
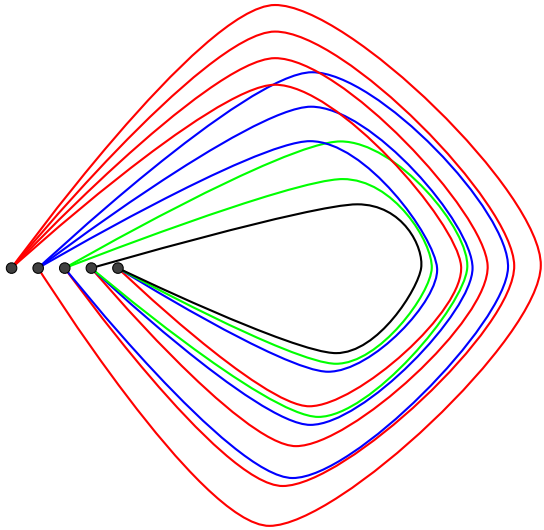

%


\begin{theorem}\label{main}

Every $n$-vertex complete simple topological graph generates at least $\Omega(n^{1/3})$ pairwise disjoint 4-faces.
 
\end{theorem}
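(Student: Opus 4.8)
The plan is to find many pairwise disjoint 4-faces by a greedy/iterative extraction argument, where the main engine is the observation that among any sufficiently large set of vertices whose induced drawing lives inside a small region, we can always locate one non-self-intersecting 4-cycle. First I would recall the known existence result: every complete $n$-vertex simple topological graph contains a non-self-intersecting $k$-cycle for $k$ as large as $(\log n)^{1/4-o(1)}$; in particular, for $n$ larger than some absolute constant, it contains a non-self-intersecting 4-cycle, hence a 4-face. The strategy is then to repeatedly pull out such a 4-face, discard everything that interacts with it, and continue on the leftover vertices.

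The key quantitative step is controlling how many vertices we must discard at each round. Suppose we have already extracted $t$ pairwise disjoint 4-faces $F_1,\dots,F_t$, each bounded by a 4-cycle on $4$ vertices, using $4t$ vertices total. To keep building, I want a vertex set $W$ of the remaining $n-4t$ vertices such that the 4-face produced from $W$ lands in a region disjoint from all of $F_1,\dots,F_t$. The natural way: each $F_i$ is an open bounded region; a 4-cycle drawn entirely in the complement of $\bigcup F_i$ yields a 4-face that (being connected and avoiding each Jordan curve's inside/boundary) is disjoint from all the $F_i$. So I would argue that if we pick a point (say, far away, or in a cleverly chosen cell of the arrangement of the $t$ bounding cycles and the edges among the remaining vertices), a positive fraction of the remaining vertices can be ``seen'' in a common face of the arrangement formed by the $4t$ already-used edges together with its crossings — the arrangement of $O(t)$ arcs has $O(t^2)$ faces, so by pigeonhole some face contains $\Omega((n-4t)/t^2)$ of the remaining vertices, and restricting the complete simple topological graph to those vertices gives a sub-drawing that still contains a non-self-intersecting 4-cycle once $(n-4t)/t^2$ exceeds the absolute constant threshold. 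That 4-cycle's interior is contained in that single face of the arrangement, hence disjoint from the previously chosen 4-faces. Running this until $(n-4t)/t^2$ drops below the constant gives $t = \Omega(n^{1/3})$, which is exactly the claimed bound.

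The subtle point — and what I expect to be the main obstacle — is that a 4-cycle found among vertices lying in one face of the arrangement of the \emph{old} edges need not have its \emph{own} edges confined to that face: the new 4-cycle's arcs may wander out of the face and cross the old bounding cycles, so its interior could overlap an old $F_i$. To handle this I would instead take the arrangement of \emph{all} edges of $G$ incident to the $4t$ used vertices (there are $4t(n-1)$ such edges, but what matters is the crossing structure), or more cleanly: observe that the bounding 4-cycles $C_1,\dots,C_t$ together have at most $\binom{4t}{2}$ crossings, so their union is a plane graph with $O(t^2)$ faces; pick the outer face (or any fixed face) $\Phi$, and among the $n-4t$ leftover vertices keep only those lying in $\Phi$ — at least $(n-4t)/O(t^2)$ of them by pigeonhole. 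Now here is the repair: take a \emph{single} leftover vertex $v\in\Phi$ and a large set $U\subset \Phi$ of leftover vertices, and use the stronger fact that a complete simple topological graph on a large vertex set contains a non-self-intersecting 4-cycle \emph{through any prescribed vertex}, chosen so that... — actually the cleanest fix is to choose, inside $\Phi$, a small topological disk $D\subset \Phi$ and only keep vertices we can route to, but since arcs are uncontrolled, the honest resolution is to bound crossings with the old cycles: if the new 4-cycle $C$ crosses $\bigcup_i C_i$ at all, reroute or re-select. I would resolve this by working not with an arbitrary 4-cycle but with one obtained from a convex-position-type or ``stretchable'' sub-structure; alternatively, accept a weaker per-step loss. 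I will flag that making the ``disjointness'' rigorous — guaranteeing the newly found 4-face avoids all previous ones despite uncontrolled edge routing — is the crux, and the proof should pin down precisely which sub-drawing guarantees a 4-cycle whose interior stays within a prescribed face of the arrangement of previously used curves.

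Once the per-step extraction lemma is in place, the counting is routine: solving $t \asymp \sqrt{n/t^2}$ — i.e. the recursion stops when $n - 4t = \Theta(t^2)$ — yields $t = \Theta(n^{1/3})$, completing the proof of Theorem~\ref{main}. The stated corollary about a 4-face of area $O(n^{-1/3})$ is then immediate: the $\Omega(n^{1/3})$ pairwise disjoint 4-faces lie in the unit square and have disjoint interiors, so their areas sum to at most $1$, and the smallest has area $O(n^{-1/3})$.
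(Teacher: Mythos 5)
Your approach is genuinely different from the paper's, and you have correctly identified where it breaks --- but you have not repaired the break, and the gap is fatal to the argument as written.

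You set up a greedy extraction: after pulling out $t$ disjoint 4-faces bounded by cycles $C_1,\dots,C_t$, you observe the arrangement of these $4t$ arcs has $O(t^2)$ faces, pick a face $\Phi$ containing $\Omega\bigl((n-4t)/t^2\bigr)$ leftover vertices, and want to find a new non-self-intersecting 4-cycle among them whose interior avoids all previous $F_i$. The crux you flag yourself is exactly the problem: a non-self-intersecting 4-cycle spanned by vertices sitting in $\Phi$ has no reason to keep its \emph{edges} inside $\Phi$. In a simple topological graph the edges between vertices of $\Phi$ may exit $\Phi$, cross the old $C_i$'s, and enclose one or more of the old $F_i$'s. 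None of the proposed patches (routing to a sub-disk, ``stretchable'' substructures, looking at all edges incident to used vertices, accepting a weaker per-step loss) is carried out, and there is no apparent way to make them work directly; the whole point is that in a simple topological drawing you have essentially no control over where an arc goes once it leaves a neighborhood of its endpoints. There is also a secondary issue that $\Phi$ need not be simply connected, so even a 4-cycle drawn entirely inside $\Phi$ could encircle a previous $F_i$. Finally, your closing arithmetic is internally inconsistent: stopping when $(n-4t)/t^2$ drops below an absolute constant gives $t = \Theta(\sqrt{n})$, and ``$t \asymp \sqrt{n/t^2}$'' gives $t = \Theta(n^{1/4})$; neither matches the claimed $n^{1/3}$, which signals that the quantitative skeleton has not been pinned down.

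The paper circumvents exactly this uncontrolled-routing obstacle by never relying on ``vertices that happen to lie in a face'' of an arrangement of previously chosen arcs. Instead it fixes a vertex $v_0$ on the boundary of the unbounded cell and grows, edge by edge, an explicit \emph{plane} subgraph $H \subseteq G$ containing all the $v_0$-stars and $\Theta(n)$ additional edges, invoking the Ruiz-Vargas lemma (every vertex can send two interior-disjoint edges into the face of a connected plane subgraph that contains it). Because $H$ is planar by construction, its regions are honestly disjoint, and disjointness of the eventually extracted 4-faces is automatic. The subsequent Dilworth dichotomy (long chain vs.\ long antichain in a plane matching inside $H$), plus Lemma~\ref{key} for stuffing a 4-face into a size-$k$ face crowded with vertices, is what then produces the $\Omega(n^{1/3})$ disjoint 4-faces. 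If you want to salvage a greedy approach, you would need some analogue of this planarity control --- a way to guarantee that the witnessing 4-cycle's \emph{edges}, not just its vertices, are trapped in a prescribed region --- and that is precisely the content you would have to supply.
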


We apply the results mentioned above to a topological variant of Heilbronn's triangle problem.  Over 70 years ago, Heilbronn asked: What is the smallest $h(n)$ such that any set of $n$ points in the unit square spans a triangle whose area is at most $h(n)$?   A simple triangulation argument shows that $h(n) \leq O(\frac{1}{n})$.  This was improved several times by Roth and Schmidt \cite{roth1,roth2,roth3,roth4,schmidt}, and currently, the best known upper bound is $\frac{1}{n^{8/7 - o(1)}}$ due to Koml\'os, Pintz, and Szemer\'edi~\cite{kpstriangle}.  Heilbronn conjectured that $h(n) = \Theta(\frac{1}{n^2})$, which was later disproved by Koml\'os, Pintz, and Szemer\'edi~\cite{kpslower}, who showed that $h(n) \geq \Omega(\frac{\log n}{n^2})$.  Erd\H os \cite{erdosH} conjectured that this new bound is asymptotically tight.  

Here, we study Heilbronn's problem for topological graphs.  A simple variant of Proposition~\ref{twisted} shows that one cannot guarantee a small triangle in complete simple topological graphs drawn in the unit square. 

 \begin{proposition}\label{twisted2}
 
 For every $n \geq 1$ and $\varepsilon > 0$, the complete $n$-vertex simple topological graph can be drawn in the unit square such that every odd face it generates has area at least $1-\varepsilon$.
 \end{proposition}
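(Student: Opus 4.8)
The plan is to transplant the twisted‑graph drawing provided by Proposition~\ref{twisted} into the unit square via a homeomorphism from the plane onto the open unit square that inflates a small disk around the origin until it fills all but an $\varepsilon$‑fraction of the square. Since that disk lies inside every odd face of the twisted graph, each odd face of the transplanted drawing will be forced to have area at least $1-\varepsilon$.

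First I would dispose of $n\le 2$, where there are no cycles and any drawing works, and for $n\ge 3$ let $G_0$ be the complete $n$‑vertex simple topological graph from Proposition~\ref{twisted}, so every odd face of $G_0$ contains the origin $o$. Since $G_0$ has only finitely many cycles it generates only finitely many odd faces, each an open set containing $o$, so their intersection is open and contains $o$; fix $r>0$ with the disk $D_r(o)$ contained in every odd face, equivalently $D_r(o)\subseteq\mathrm{in}(C)$ for every non‑self‑intersecting odd cycle $C$ of $G_0$, where $\mathrm{in}(C)$ denotes the face inside of $C$. Next, given $\varepsilon>0$, pick a homeomorphism $h\colon\RR^2\to(0,1)^2$ with $\area(h(D_r(o)))\ge 1-\varepsilon$; such an $h$ exists, e.g.\ by composing an arbitrary homeomorphism $\RR^2\to(0,1)^2$ with a self‑homeomorphism of $(0,1)^2$ that is the identity near $\partial(0,1)^2$ and, by the planar Schoenflies theorem, expands the image of $D_r(o)$ so that it contains $[\delta,1-\delta]^2$ with $(1-2\delta)^2\ge 1-\varepsilon$. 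Finally set $G:=h(G_0)$: this is a simple drawing of $K_n$ in the (interior of the) unit square, and being a homeomorphism of $\RR^2$, $h$ carries the non‑self‑intersecting $k$‑cycles of $G_0$ bijectively onto those of $G$ while preserving $k$.

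It then remains to check that for each non‑self‑intersecting odd cycle $C$ of $G_0$, $h$ sends the face inside of $C$ to the face inside of $h(C)$; granting this, the face inside of $h(C)$ equals $h(\mathrm{in}(C))\supseteq h(D_r(o))$ and so has area at least $1-\varepsilon$, and since every odd face of $G$ arises this way the proposition follows. To prove the claim: $C$ is compact, so $\mathrm{in}(C)$ is bounded and $h(\overline{\mathrm{in}(C)})$ is a compact subset of $(0,1)^2$; hence $h(\mathrm{in}(C))$ is a bounded, open, connected subset of $\RR^2\setminus h(C)$ whose closure misses $h(\mathrm{out}(C))$, with $\mathrm{out}(C)$ the unbounded component of $\RR^2\setminus C$, so it is a connected component of $\RR^2\setminus h(C)$. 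Meanwhile $\mathrm{out}(C)$ contains a neighbourhood of infinity, which $h$ maps to a subset of $(0,1)^2$ whose closure contains all of $\partial(0,1)^2$; therefore $h(\mathrm{out}(C))\cup(\RR^2\setminus(0,1)^2)$ is connected and unbounded, i.e.\ it is the unbounded component of $\RR^2\setminus h(C)$, leaving $h(\mathrm{in}(C))$ as the bounded one. This last step is the only delicate part: it genuinely uses that $h$ maps onto the bounded square rather than onto $\RR^2$ — so the naive ``homeomorphisms preserve face‑insides'' must be re‑derived — and is where I expect the write‑up to need the most care; the construction of the inflating homeomorphism $h$ and the identification of cycles are routine.
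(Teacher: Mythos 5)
Correct, and essentially the same approach as the paper: both post-compose the twisted drawing from Proposition~\ref{twisted} with a homeomorphism that inflates a small disk around the point common to all odd faces until it occupies nearly all of the unit square. The only (minor) difference is that the paper uses a self-homeomorphism of the plane, for which preservation of face-insides is immediate (a plane self-homeomorphism carries bounded sets to bounded sets and unbounded to unbounded), whereas your $h\colon\RR^2\to(0,1)^2$ forces the extra verification — which you correctly supply, though it can be shortened: $h(\mathrm{in}(C))$ is a bounded clopen subset of $\RR^2\setminus h(C)$, hence by the Jordan curve theorem it is the bounded component, without needing to analyze $h(\mathrm{out}(C))\cup(\RR^2\setminus(0,1)^2)$.
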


On the other hand, as an immediate corollary to Theorem \ref{main},  we have the following.

\begin{corollary}
 Every $n$-vertex complete simple topological graph drawn in the unit square generates a 4-face with area at most $O(\frac{1}{n^{1/3}})$.
\end{corollary}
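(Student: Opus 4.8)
The plan is to derive the corollary directly from Theorem~\ref{main} by a simple area-packing (pigeonhole) argument. Suppose $G$ is an $n$-vertex complete simple topological graph drawn in the unit square $[0,1]^2$. By Theorem~\ref{main}, $G$ generates pairwise disjoint 4-faces $F_1,\dots,F_m$ with $m=\Omega(n^{1/3})$. Each $F_i$ is the face inside of some non-self-intersecting $4$-cycle $\gamma_i$ in $G$, and $\gamma_i$ is a Jordan curve drawn inside $[0,1]^2$.

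First I would check that every $F_i$ is contained in the unit square. The set $\mathbb{R}^2\setminus[0,1]^2$ is connected and unbounded, and it is disjoint from $\gamma_i$ (which lies in $[0,1]^2$), so it is contained in the unbounded component of $\mathbb{R}^2\setminus\gamma_i$. Hence the bounded component $F_i$ of $\mathbb{R}^2\setminus\gamma_i$ is disjoint from $\mathbb{R}^2\setminus[0,1]^2$, i.e. $F_i\subseteq[0,1]^2$. Since the faces $F_1,\dots,F_m$ are pairwise disjoint open subsets of $[0,1]^2$, we get $\sum_{i=1}^m\area(F_i)\le 1$. By pigeonhole, some $F_i$ satisfies $\area(F_i)\le 1/m=O(n^{-1/3})$, which is the desired bound.

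There is essentially no obstacle to overcome here: the entire content of the corollary lies in Theorem~\ref{main}, and the only point needing (minor) attention is the containment $F_i\subseteq[0,1]^2$, which follows from the Jordan curve theorem together with the fact that the unit square is simply connected. Everything else is immediate once Theorem~\ref{main} is in hand.
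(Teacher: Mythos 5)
Your proof is correct and is exactly the pigeonhole argument the paper has in mind when it calls this an "immediate corollary" of Theorem~\ref{main}; the paper does not spell it out, but there is no other route. Your extra care in checking $F_i\subseteq[0,1]^2$ is a reasonable touch, though the paper implicitly takes it for granted.
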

 
 \noindent In the other direction, a construction due to Lefmann \cite{lefmann} shows that the complete $n$-vertex geometric graph can be drawn in the unit square such that every 4-face has area at least $\Omega\left(\frac{\log^{1/2}n}{n^{3/2}}\right)$.  It would be interesting to see if one can improve this bound for simple topological graphs.  Lastly, let us mention that Heilbronn's triangle problem has been studied for $k$-gons, and we refer the interested reader to \cite{lefmann} for more results.

Our paper is organized as follows.  In Section \ref{const}, we establish Propositions \ref{twisted} and \ref{twisted2}. In Section \ref{4face}, we establish a lemma on finding 4-faces in complete simple topological graph.  In Section \ref{maindis}, we use this lemma to prove Theorem \ref{main}.  Finally in Section \ref{last}, we consider Heilbronn's triangle problem for (not necessarily simple) topological graphs.

 
\section{The complete twisted graph}\label{const}

 The \emph{complete twisted graph} on $n$ vertices is a complete simple topological graph with vertices labelled $1$ to $n$ which we will draw on the horizontal axis from left to right, with the property that two edges intersect if their indices are nested, i.e., edges $(i,j)$ and $(k,\ell)$, with $i<j$, $k<\ell$, intersect if and only if $i < k<\ell < j$ or $k < i < j<\ell$.  See Figure 1.  The complete twisted graph was introduced by Harborth and Mengerson \cite{HM} as an example of a complete simple topological graph with no subgraph that is weakly isomorphic to the complete convex geometric graph on five vertices.  See also \cite{PST,SZ} for more applications.

\begin{proposition}\label{propt}
There exists a common point in the interior of all the odd faces generated by the complete twisted graph. Moreover, for every $\varepsilon>0$, the complete twisted graph can be drawn in the unit square such that every odd face has area at least $1-\varepsilon$. 
\end{proposition}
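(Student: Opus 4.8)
The plan is to exhibit one explicit drawing of the complete twisted graph which witnesses both assertions at once; it is the drawing of Figure~\ref{figtwist}, scaled and made suitably ``thin.'' Fix a small $\delta>0$. Place the vertices $p_1,\dots,p_n$ as a tiny cluster inside the disk of radius $\delta$ about the corner $(0,0)$ of the unit square, ordered along a short arc, and draw each edge $(i,j)$ with $i<j$ as a simple arc that leaves the cluster, makes one loop around the square while staying inside the thin frame $S_\delta:=[0,1]^2\setminus(\delta,1-\delta)^2$ -- up the left side, across the top, down the right side, back along the bottom -- and re-enters the cluster at $p_j$. We choose the radial positions of the arcs inside $S_\delta$ so that (a) the arcs of $(i,j)$ and $(k,\ell)$ meet exactly once when the index intervals are nested, i.e.\ $i<k<\ell<j$ or $k<i<j<\ell$, and are disjoint otherwise, so that the drawing is a copy of the complete twisted graph, and (b) each arc crosses the vertical line $x=1/2$ exactly twice, once at height $>1-\delta$ and once at height $<\delta$.

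Set $o=(1/2,1/2)$ and let $\rho=\{(1/2,t):t\ge 1/2\}$ be the upward vertical ray from $o$; after an arbitrarily small perturbation we may assume $\rho$ misses all vertices and meets the drawing transversally. By (b), every edge meets $\rho$ exactly once: the crossing at height $>1-\delta>1/2$ lies on $\rho$, the one at height $<\delta<1/2$ does not. Hence, for any cycle $C$ of the drawing, the number of points in $C\cap\rho$ has the same parity as the number of edges of $C$, that is, as $|C|$. If $C$ is non-self-intersecting it is a Jordan curve, so by the Jordan curve theorem $o$ lies in the face inside $C$ if and only if $\rho$ meets $C$ an odd number of times, i.e.\ if and only if $|C|$ is odd. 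In particular $o$ lies in every odd face, which proves the first assertion (and, after translating the drawing so that $o$ is the origin, also yields the graph claimed in Proposition~\ref{twisted}).

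For the area bound, note that every edge -- and the whole vertex cluster -- lies in $S_\delta$, so every cycle $C$ of the drawing is disjoint from the open square $Q_\delta:=(\delta,1-\delta)^2$. Let $C$ now bound an odd face. Then $Q_\delta$ is connected, disjoint from the Jordan curve $C$, and contains $o$, which lies inside $C$ by the previous paragraph; hence $Q_\delta$ lies entirely inside $C$, and so $\area(C)\ge\area(Q_\delta)=(1-2\delta)^2$. Choosing $\delta\le\varepsilon/4$ makes this at least $1-\varepsilon$, and the drawing sits in the unit square, as required.

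The one thing that needs care is the construction in the first paragraph -- routing the $\binom{n}{2}$ edges inside the thin frame $S_\delta$ so that conditions (a) and (b) both hold; in particular one must reproduce the nesting/crossing pattern of the twisted graph and not, say, its complement. This is an explicit though somewhat fussy task rather than a deep one: one can, for instance, give edge $(i,j)$ a piecewise-constant radial coordinate in the annular frame that takes one value on an initial portion of its loop and another value on the remainder, with the two values and the switch-over point read off from the relative orders of $i$ and $j$ so that two arcs are forced to meet exactly once precisely when their index intervals nest, and then check (b) directly -- which is exactly what Figure~\ref{figtwist} depicts for $n=5$. Once the drawing is in hand, the ray-parity argument and the connectedness argument above are routine, and both parts of Proposition~\ref{propt} follow.
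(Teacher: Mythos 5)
The proposal is correct and takes essentially the same approach as the paper: both arguments rest on exhibiting a ray that crosses every edge of the twisted drawing exactly once, so that the parity of intersections with any non-self-intersecting cycle equals the parity of the cycle's length, placing a fixed basepoint in every odd face (cf.\ Claim~\ref{parity}). The only difference is presentational -- you build the thin-frame drawing directly in the unit square and get the area bound from the connectedness of $Q_\delta$, whereas the paper first proves the common-point claim for the vertices-on-a-line drawing and then appeals to a homeomorphism to fatten the faces; these are interchangeable realizations of the same idea.
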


We will need the following claim, which is essentially equivalent to the Jordan curve theorem for piece-wise smooth curves.

\begin{claim}\label{parity}
Let $\gamma$ be a piece-wise smooth Jordan curve contained in an open Euclidean disk $D$. Then a point $p$ is in the bounded region of $\mathbb R^2 \setminus \gamma$ if there is a piece-wise linear arc $\alpha$ starting at $p$ and ending in  $\mathbb R^2 \setminus D$ such that $\alpha$ (properly) crosses $\gamma$ an odd number of times.
\end{claim}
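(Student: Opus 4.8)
The plan is to take the piecewise smooth Jordan curve theorem itself as given and to extract the stated crossing-parity criterion from it by an elementary ``walk around a loop'' argument. By the Jordan curve theorem for piecewise smooth curves, $\mathbb{R}^2 \setminus \gamma = U \sqcup V$, where $U$ and $V$ are open and connected, $U$ is bounded, and $V$ is unbounded. Since $\gamma \subset D$, the set $\mathbb{R}^2 \setminus D$ is connected, unbounded and disjoint from $\gamma$, so it lies entirely in $V$; in particular the endpoint $q'$ of $\alpha$ lies in $V$. We may assume $p \notin \gamma$ (an arc starting at $p \in \gamma$ cannot ``properly cross'' $\gamma$ at its initial point), so $p \in U$ or $p \in V$, and it is enough to rule out $p \in V$. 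Call a piecewise linear arc or closed piecewise linear curve \emph{good} if it meets $\gamma$ in only finitely many points, each of them a transversal crossing at a smooth point of $\gamma$; thus the hypothesis that $\alpha$ properly crosses $\gamma$ means precisely that $\alpha$ is good.

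The heart of the argument is the following assertion: \emph{every good closed piecewise linear curve $\delta$ crosses $\gamma$ an even number of times}. To see this, parametrize $\delta$ by $[0,1]$ so that $\delta(0)=\delta(1)\notin\gamma$, and let $0 < t_1 < \cdots < t_m < 1$ be the crossing parameters, which split $[0,1]$ into subintervals $I_0,\dots,I_m$. On each $I_j$ the curve $\delta$ avoids $\gamma$, hence stays in a single component of $\mathbb{R}^2\setminus\gamma$, and continuity forces $\delta(I_0)$ and $\delta(I_m)$ to lie in the component of $\delta(0)=\delta(1)$. At a crossing parameter $t_i$ the point $x=\delta(t_i)$ is a smooth point of $\gamma$; choosing a small disk about $x$ in which $\gamma$ is an embedded smooth arc that separates the disk into two pieces lying in different components of $\mathbb{R}^2\setminus\gamma$ (the two-sidedness of $\gamma$, which is part of the Jordan curve theorem for smooth curves), transversality of the crossing shows that $\delta(I_{i-1})$ and $\delta(I_i)$ lie in different components. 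Hence the components of $\delta(I_0),\delta(I_1),\dots,\delta(I_m)$ alternate, and since the first and last agree, $m$ is even.

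Now suppose, for contradiction, that $p \in V$. As $V$ is open and connected it is polygonally path-connected, so there is a piecewise linear arc $\beta$ from $q'$ to $p$ contained in $V$; being contained in the open set $V \subseteq \mathbb{R}^2\setminus\gamma$, the arc $\beta$ does not meet $\gamma$ at all. Then $\delta := \alpha \cdot \beta$ (traverse $\alpha$ from $p$ to $q'$, then $\beta$ from $q'$ back to $p$) is a good closed piecewise linear curve based at $p\notin\gamma$ whose crossings with $\gamma$ are exactly those of $\alpha$; by the assertion just proved this number is even, contradicting the hypothesis that $\alpha$ crosses $\gamma$ an odd number of times. Therefore $p \in U$, the bounded region of $\mathbb{R}^2\setminus\gamma$, which is what we wanted.

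The only substantial ingredient is the Jordan curve theorem for piecewise smooth curves --- both the splitting $\mathbb{R}^2 \setminus \gamma = U \sqcup V$ and the local two-sidedness of $\gamma$ used in the lemma --- which is exactly why the claim is ``essentially equivalent'' to it and which is classical for such $\gamma$; everything else is routine bookkeeping about piecewise linear arcs. If one wished to avoid even the local two-sidedness, the parity lemma could instead be obtained by contracting the (null-homotopic) loop $\delta$ in $\mathbb{R}^2$ through good closed curves and observing that the crossing count changes by $0$ or $\pm 2$ at each of the finitely many tangency or corner events and equals $0$ at the end; I expect the walk-around-the-loop argument above to be the cleaner one to write down in full.
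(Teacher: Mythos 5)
Your proposal is correct, and it takes a noticeably different route from the paper's sketch. You take the Jordan curve theorem for piecewise smooth curves as a black box (two components $U \sqcup V$, local two-sidedness at smooth points) and derive the parity criterion from a single clean lemma: any good closed PL loop crosses $\gamma$ an even number of times, because the subarcs between consecutive crossings alternate between the two components and the first and last agree. You then close $\alpha$ up with a $\gamma$-avoiding PL arc in $V$ and get a contradiction. The paper instead builds the mod-2 linking number $lk_2(\cdot,\gamma)$ as the central object: it argues well-definedness of $lk_2$ by a local perturbation argument (crossings appear and vanish in pairs), argues that $lk_2$ is constant on each path component by constructing a connecting arc that ``follows $\gamma$ closely'', argues that $lk_2$ flips across $\gamma$, and then pins down the bounded component as the $lk_2=1$ level set using $lk_2=0$ outside $D$. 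In effect the paper re-derives part of the Jordan-curve structure from the parity invariant, whereas you use that structure as input; your closed-loop lemma replaces both the perturbation argument and the ``follow $\gamma$ closely'' construction. For this particular one-directional claim (odd crossings $\Rightarrow$ bounded), your version is the tighter and more rigorous write-up; the paper's version is a sketch aimed at motivating the $lk_2$ notation that is reused later in Section 5, and is somewhat more self-contained at the cost of hand-waving in the connecting-arc step.
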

\begin{proof}(Sketch)
 It can be shown that perturbing $\alpha$ locally doesn't change the parity of the number of intersections between
 $\gamma$ and $\alpha$.  That is, proper crossings can only appear or disappear in pairs when $\alpha$ is
perturbed.  See Figure \ref{jord}.

\begin{figure}
\centering
\begin{tikzpicture}

\draw [blue] plot  coordinates{(.5,1.5) (1/2,1/2) (1,1) (3/2,1/2)}; 

\draw plot coordinates {(0,3/4) (5/3,3/4)};

\draw [blue] plot coordinates{(3.5,1.5) (1/2+3,1/2) (1+3,1) (3+3/2,1/2)}; 

\draw plot coordinates {(1+2,5/4) (1+5/3+2,5/4)};

\end{tikzpicture}
\caption{Parity of proper crossings under a perturbation of the arc, a local picture.}
\end{figure}
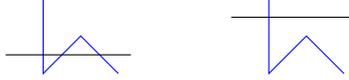\label{jord}  Therefore, in what follows we denote by $lk_2(p,\gamma)$ the parity of the number of intersections between $\gamma$ and any arc $\alpha$ that crosses $\gamma$ properly, starting at $p$, ending in a point outside of $D$. Notice that if $p$ and $q$ are connected by a piece-wise smooth arc that avoids $\gamma$,  then $lk_2(p,\gamma)=lk_2(q,\gamma)$. Consider for every pair of points $p$ and $q$, two rays $\alpha_p$ and $\alpha_q$ that emanate from $p$ and $q$ respectively, and nearly reach $\gamma$. Assume that both stay at some small positive distance $\epsilon>0$ from $\gamma$.  Then extend $\alpha_p$ following $\gamma$ closely without ever intersecting it, 
since $\gamma$ has finite length, if $p$ and $q$ are in the same connected component then the extension of $\alpha_p$ eventually intersects $\alpha_q$ and together they form an arc connecting $p$ and $q$ that avoids $\gamma$. 
Furthermore, two points that lie near $\gamma$ on opposite sides of $\gamma$ have different parity, so we can conclude that the two path-connected components of $\mathbb R^2 \setminus \gamma$ can be identified with the two possible values of $lk_2(.,\gamma)$. Finally, observe that for any point $p$ outside $D$,
any arc that stays outside $D$ doesn't intersect $\gamma$, hence $lk_2(p,\gamma)=0$.  Moreover, a point is in the bounded component of $\mathbb R^2 \setminus \gamma$ if and only if $lk_2(p,\gamma)=1$. \end{proof}

\begin{proof}[Proof of Proposition \ref{propt}]
Consider the complete $n$-vertex twisted graph such vertex $v_i$ is placed at $(i,0)$.  See Figure 1. Let $p=(n+1,0)$ consider a ray emanating out of $p$ that passes just above the vertices. This ray intersects each edge of the twisted drawing exactly once.  Hence, for any non-self intersecting odd cycle $\gamma$ in $G$, $lk_2(p,\gamma)=1$.  By Claim \ref{parity}, $p$ is in the face inside $\gamma$. 

To upgrade this drawing so that each odd face has large area, we can apply a homeomorphism $\phi$ to the plane such that the drawing lies in the unit square, all the vertices cluster around the origin, and each face that contains $\phi(p)$ has area at least $1-\varepsilon$.
\end{proof}

\section{Finding a 4-face inside a large face}\label{4face}

In this section, we establish several lemmas that will be used in the proof of Theorem \ref{main}.  First, let us clarify some terminology.  Given a \emph{planar} graph $H$ drawn in the plane (with no crossing edges), the components of the complement of $H$ are called the \emph{faces} of $H$.  Let $G$ be a complete simple topological graph and let $T$ be a triangle in $G$.  We let $V(T)$ denote the set of vertices of the 3-cycle in $G$ that generates $T$.  We say that $T$ is \emph{incident} to vertex $v \in V(G)$, if $v \in V(T)$.  We say that triangle $T$ is \emph{empty}, if there is no vertex from $G$ that lies in $T$.  We will repeatedly use the following lemma due to Ruiz-Vargas (see also \cite{FV}).

\begin{lemma}[\cite{RV}]\label{mex}
Let $G$ be a complete simple topological graph and $H$
be a connected plane subgraph of $G$ with at least two vertices. Let
$v$ be a vertex of $G$ that is not in $H$, and let $F$ be the face of $H$ that
contains $v$.  Then there exist two edges of $G$ emanating out of $v$ to the boundary of $F$ such that their interior lies complete inside of $F$.

\end{lemma}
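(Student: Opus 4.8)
The plan is to establish the lemma by analyzing the edges of $G$ joining $v$ to the vertices of $H$ on $\partial F$ and showing that at least two of them never leave $F$. For the setup: since $H$ is connected and has at least two vertices it contains an edge, so $\partial F$ carries at least two vertices of $H$, and since $H$ is connected the face $F$ is homeomorphic to an open disk whose boundary is traced by a single closed walk in $H$. For a vertex $w$ of $H$ on $\partial F$, the edge $vw$ exists because $G$ is complete; moving along $vw$ from $v$, let $p_w$ be the first point of $vw$ lying on $H$. As $v$ lies in the open face $F$, the part of $vw$ before $p_w$ stays in $F$, so $p_w\in\partial F$; and since an arc of $G$ cannot pass through a non-endpoint vertex, either $p_w=w$ or $p_w$ is interior to some edge $e_w=a_wb_w$ of $H$ on $\partial F$, in which case simplicity of $G$ forces $e_w$ to be non-incident to $w$ (otherwise $vw$ and $e_w$ would meet both at a shared endpoint and at $p_w$), so $w\notin\{a_w,b_w\}$. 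Call $vw$ \emph{good} if $p_w=w$, i.e.\ if the open arc $vw$ lies entirely inside $F$; it suffices to produce two good edges. Write $\sigma_w\subseteq vw$ for the ``stub'' from $v$ to $p_w$, whose interior lies in $F$.

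The basic tool is simplicity of $G$: any two edges $vw,vw'$ share the endpoint $v$ and so do not cross, hence the stubs $\{\sigma_w\}$ are pairwise non-crossing arcs issuing from $v$; since $F$ is a disk they reach $\partial F$ at points occurring in the same cyclic order around $\partial F$ as the stubs do around $v$, and they cut $F$ into topological disks. Two small cases display the mechanism. If $H=K_2$ with edge $ab$, then $F$ is the complement of the arc $ab$, and since each of $va$ and $vb$ shares an endpoint with $ab$ it cannot cross it, so both are good. If $H$ is a triangle $w_1w_2w_3$ with $v$ inside, then a bad edge $vw_i$ would have to leave $F$ through the opposite side $w_jw_k$ --- the only side it is allowed to cross --- and then reach $w_i$ from outside the triangle; following two such bad edges and using that edges at $v$ cannot cross forces a crossing, by an intermediate-value argument on the two arcs as they pass a common corner, so at most one of $vw_1,vw_2,vw_3$ is bad.

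The heart of the proof, and the step I expect to be the main obstacle, is to turn this into a global argument that at least two stubs reach a vertex. The idea is extremal/inductive. Suppose at most one edge is good, so at least $|V(H)\cap\partial F|-1$ stubs end at interior points of edges. Given a bad edge $vw$ with $p_w$ interior to $e_w=a_wb_w$, append to $\sigma_w$ the sub-arc of $e_w$ from $p_w$ to $a_w$ and then the edge $va_w$ --- which, sharing $v$ with $vw$ and $a_w$ with $e_w$, crosses neither --- to obtain a simple closed curve bounding a disk on one side of $\sigma_w$, and symmetrically a disk on the other side. Using non-crossing of the stars at $v$, one argues that passing from $w$ to $a_w$ (resp.\ $b_w$) strictly decreases the combinatorial complexity of the part of $\partial F$ cut off on that side of $\sigma_w$, so the process terminates, and it can terminate only at a good edge; running it on the two sides of $\sigma_w$ yields two distinct good edges. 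Making the descent and its termination precise --- which vertices and stubs are confined to which disk, and why the two sides truly give different edges --- is exactly the delicate bookkeeping in the work of Ruiz-Vargas, carried out there by a simultaneous induction on the number of vertices of $H$ on $\partial F$ together with repeated use of the Jordan curve theorem and the simplicity of $G$.
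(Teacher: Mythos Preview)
The paper does not prove this lemma at all; it is quoted verbatim from Ruiz-Vargas~[RV] and used as a black box throughout Sections~\ref{4face} and~\ref{maindis}. So there is no ``paper's own proof'' to compare your attempt against.

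As for your sketch itself: the setup and the two small cases are correct, and you have identified the right mechanism --- the stubs $\sigma_w$ are pairwise non-crossing because they all share the endpoint $v$, and when a bad stub lands in the interior of an edge $a_wb_w$ one should pass to $va_w$ or $vb_w$. But the descent step is not yet a proof, and you say so yourself in the last paragraph. One concrete soft spot: you build the Jordan curve from $\sigma_w$, a piece of $e_w$, and the \emph{entire} edge $va_w$, which may leave $F$; that curve is indeed simple, but then ``the part of $\partial F$ cut off on that side of $\sigma_w$'' is not well defined, since $\partial F$ can weave in and out of the disk you have bounded. What one actually tracks is the arc of $\partial F$ from $p_w$ to the current target vertex, together with the observation that the next stub $\sigma_{a_w}$ must land on that arc (because $va_w$ crosses neither $\sigma_w$ nor $e_w$); iterating this on each side yields the two good edges. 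You allude to exactly this in your closing sentence but defer the bookkeeping to~[RV] rather than carrying it out. So: right intuition, honest about the gap, but what you have written is a plan rather than a self-contained argument --- which, to be fair, is also all the paper offers.
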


\noindent We omit the proofs of the following two lemmas which are simple consequences of Lemma \ref{mex}.

\begin{lemma}\label{one}
Let $G$ be a complete simple topological graph and $H$
be a connected plane subgraph of $G$ with at least two vertices. Let
$v$ be a vertex of $H$ with degree one, and let $F$ be the face of $H$ whose boundary contains $v$.  Then there exist an edge of $G$ emanating out of $v$ to the boundary of $F$ such that its interior lies complete inside of $F$.

\end{lemma}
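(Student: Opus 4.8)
The plan is to remove the degree-one vertex $v$, apply Lemma \ref{mex} to what is left, and then check that putting the pendant edge at $v$ back in merely cuts a slit in the relevant face. First I would let $u$ be the unique neighbour of $v$ in $H$ and set $H' := H - v$. Since deleting a leaf from a connected graph keeps it connected, $H'$ is a connected plane subgraph of $G$, it has at least two vertices whenever $|V(H)| \ge 3$, and $v$ is a vertex of $G$ not lying in $H'$; let $F'$ be the face of $H'$ containing $v$. (The remaining case $|V(H)| = 2$ is degenerate — $H$ is a single arc — and does not arise in the applications of the lemma.)

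Next I would pin down $F$ in terms of $F'$. Because $H$ is drawn without crossings, the edge $vu$ meets $H$, and hence $H'$, only at the vertex $u$, so the open arc $vu$ is disjoint from $H'$, lies inside a single face of $H'$, and that face must be $F'$ since the arc limits onto $v \in F'$; in particular $u \in \partial F'$. Adding the pendant arc $vu$ back to $H'$ then creates no new face and does not disconnect $F'$, so the face of $H$ whose boundary contains $v$ is exactly $F = F' \setminus \overline{vu}$, with $\partial F = \partial F' \cup \overline{vu}$. I expect this face-identification — the fact that an arc joining an interior point of $F'$ to its boundary merely slits $F'$ instead of splitting it — to be the only step that needs genuine topological care, and it can be argued through the Jordan curve theorem in the spirit of Claim \ref{parity}.

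Finally I would apply Lemma \ref{mex} to $H'$ and $v$, obtaining two distinct edges $e_1, e_2$ of $G$ that emanate from $v$ to $\partial F'$ with interiors contained in $F'$. Only one edge is needed, and at most one of $e_1, e_2$ can coincide with the edge $vu$, so pick one that does not, say $e_1$. Since $G$ is simple, $e_1$ and $vu$ meet at most once and therefore only at their common endpoint $v$, and the interior of $e_1$ cannot pass through the vertex $u$; hence the interior of $e_1$ avoids $\overline{vu}$ and so lies in $F' \setminus \overline{vu} = F$, while the far endpoint of $e_1$ is a vertex of $H'$ on $\partial F' \subseteq \partial F$. Thus $e_1$ emanates from $v$ to the boundary of $F$ with its interior inside $F$, which is what we want.
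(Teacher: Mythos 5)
The paper omits this proof, remarking only that it is a ``simple consequence of Lemma~\ref{mex}''; your argument is precisely that simple consequence, carried out carefully. Deleting the pendant vertex $v$, applying Lemma~\ref{mex} to $H' = H - v$, and observing that re-attaching the pendant arc $\overline{vu}$ merely slits $F'$ (so $F = F' \setminus \overline{vu}$ with $\partial F = \partial F' \cup \overline{vu}$), then discarding from the two edges provided by Lemma~\ref{mex} the one that might coincide with $vu$ and using simplicity of $G$ to keep the interior of the survivor off $\overline{vu}$ --- this is exactly the intended reduction, and each step is justified. Your side remark that the statement degenerates when $|V(H)| = 2$ (in which case $H'$ has only one vertex and Lemma~\ref{mex} does not apply, and in fact the conclusion fails for a lone edge) is a fair observation; that case never occurs in the paper's uses of the lemma, so the dismissal is appropriate.
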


\begin{lemma}\label{triangle}

Let $G$ be a complete simple topological graph on four vertices, and let $T$ be a triangle in $G$ with a vertex $v \in V(G)$ inside of it.  Then $G$ generates a 4-face that lies inside of the triangle $T$.  

\end{lemma}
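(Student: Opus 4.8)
The plan is to take the four-vertex complete simple topological graph $G$ with vertices $a,b,c$ (the vertices of the triangle $T$, i.e.\ $V(T)=\{a,b,c\}$) and $v$ lying inside $T$, and to build up a connected plane subgraph inside $T$ to which we can apply Lemma~\ref{mex}. First I would observe that the $3$-cycle $abc$ bounds $T$ and that $v$ lies in the face of this cycle equal to the open region $T$. Apply Lemma~\ref{mex} with $H$ the $3$-cycle $abc$ and the vertex $v$: this yields two edges of $G$ from $v$ to the boundary of $T$ whose interiors lie entirely inside $T$. Since $v$ can only be joined to $a$, $b$, or $c$, these two edges are two of the three edges $va, vb, vc$; without loss of generality say they are $va$ and $vb$, and they lie inside $T$ except for their endpoints on $\partial T$.

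Now $H' = \{a,b,c,v\}$ together with the edges $ab,bc,ca,va,vb$ is a connected plane subgraph of $G$ drawn inside the closed region $\overline{T}$ (it has no crossings: the triangle edges bound $T$, and $va,vb$ are interior-disjoint chords meeting only at $v$). This subdivides $T$ into exactly two faces: a quadrilateral face bounded by the $4$-cycle $v a c b$ (going $v \to a$ along $va$, $a \to c$ along $ac$, $c \to b$ along $cb$, $b \to v$ along $vb$), and a triangular face bounded by the $3$-cycle $v a b$. The quadrilateral $vacb$ is already a non-self-intersecting $4$-cycle in $G$, hence generates a $4$-face; it remains only to check that this $4$-face lies inside $T$, which is immediate since the bounded region it encloses is one of the two faces into which $H'$ partitions $T$, and both of those faces are subsets of $T$.

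The one point that needs a little care — and the only place I expect any friction — is the claim that adding the two interior edges $va,vb$ to the triangle really does split $T$ into precisely the quadrilateral face and the triangular face as described, with the $4$-cycle $vacb$ non-self-intersecting. This is a purely planar fact: a Jordan domain cut by two interior-disjoint arcs sharing one endpoint on an interior vertex, with their other endpoints at two of the three corners, is divided into two cells whose boundaries are the claimed cycles. I would justify it by a direct appeal to the Jordan curve theorem (or to Claim~\ref{parity}) applied to the cycle $vacb$: its interior is disjoint from the arc $vb'$... more simply, $H'$ is a plane graph with $4$ vertices and $5$ edges lying in the sphere, so by Euler's formula it has $3$ faces, one of which is the unbounded face outside $T$, leaving exactly two faces inside $T$, and tracing the boundaries identifies them as the triangle $vab$ and the quadrilateral $vacb$. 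Either way this is routine, so the lemma follows.
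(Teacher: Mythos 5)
Your proof is correct and takes the route the paper intends: the paper omits the proof of Lemma~\ref{triangle}, stating only that it is a simple consequence of Lemma~\ref{mex}, and your argument---apply Lemma~\ref{mex} to the bounding $3$-cycle to get two chords $va$, $vb$ drawn inside $T$, then read off the resulting quadrilateral face bounded by the non-self-intersecting $4$-cycle $v,a,c,b$---is exactly that consequence spelled out. The Euler-formula check that $H'$ has precisely two bounded faces inside $T$ is a clean way to justify the one point you flagged as needing care.
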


Lastly, we will need following key lemma, which can be considered as a generalization of Lemma~\ref{triangle}. Given a plane graph $H$ and a face $F$ in $H$, the \emph{size} of $F$, denoted by $|F|$, is the total length of the closed walk(s) in $H$ bounding the face $F$.  Given two vertices $u,v$ along the boundary of $F$, the \emph{distance} between $u$ and $v$ is the length of the shortest walk from $u$ to $v$ along the boundary of $F$.

\begin{lemma}\label{key}
Let $k \geq 5$ and $G$ be a complete simple topological graph and $H$
be a connected plane subgraph of $G$ with minimum degree two. Let $F$ be a face of $H$ such that $|F|= k$ and contains at least $6(k-4)$ vertices of $G$ in its interior. Then $G$ generates a 4-face that lies inside of $F$.  

\end{lemma}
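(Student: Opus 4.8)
The plan is to induct on $k$, with Lemma~\ref{triangle} (essentially the $k=3$ analog) and Lemma~\ref{one} as the engine. Start with a face $F$ of $H$ with $|F| = k \geq 5$ and at least $6(k-4)$ vertices of $G$ in its interior. The idea is to pick a boundary vertex $v$ of $F$, use Lemma~\ref{mex} to draw two edges of $G$ from $v$ into $F$ landing on the boundary of $F$, and use these two chords to cut $F$ into smaller faces. If we can do this so that one of the resulting sub-faces $F'$ has $|F'| < k$ and still contains enough interior vertices, we recurse; the base case $k \leq 4$ is handled by noting that a face of size $3$ with a vertex inside immediately gives a $4$-face via Lemma~\ref{triangle} (applied to the triangle bounding that face together with an interior vertex), and a face of size $4$ with an interior vertex is handled by first drawing one chord to split off a triangle — if the interior vertex lands in the triangular piece we apply Lemma~\ref{triangle}, otherwise we recurse on the size-$3$ side after relocating.

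Here are the steps in order. First, fix a vertex $v$ on the boundary walk of $F$. By Lemma~\ref{mex}, there are two edges $va$ and $vb$ of $G$ whose interiors lie inside $F$, with $a,b$ on the boundary of $F$. These two chords (together with pieces of the boundary of $F$) partition $F$ into up to three regions. Second, I would analyze how the boundary length and the interior vertex count split among these regions. Each chord $vx$ has length $2$ when traversed (it contributes its two copies to the walk, or rather the chord $vx$ appears once in the boundary of each of the two faces it separates), so a region cut off by the two chords and an arc of the old boundary of length $\ell$ has new boundary size roughly $\ell + 2$ or $\ell + 4$. The key bookkeeping: the total interior-vertex count is preserved (an interior vertex of $F$ not on the chords lies in exactly one sub-region), so some sub-region contains at least a $1/3$ fraction of the $6(k-4)$ vertices, i.e. at least $2(k-4)$. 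Third, I would choose $a$ and $b$ — or rather choose which of the up-to-three regions to recurse into — so that the chosen region $F'$ has $|F'| \leq k-1$: this is possible because the two chords strictly shorten at least one side unless $v$ is adjacent along the boundary to both $a$ and $b$, a degenerate case one rules out or handles separately (if $va$ or $vb$ is already an edge of $H$ on the boundary of $F$, the chord coincides with the boundary and we effectively have only one genuine new chord, which still splits $F$ into two pieces of sizes summing to $k+2$, so the smaller has size $\leq (k+2)/2 \leq k-1$ for $k \geq 4$). Then check $F'$ still has at least $6(|F'|-4)$ interior vertices: we need $2(k-4) \geq 6((k-1)-4) = 6(k-5)$, which holds for $k$ up to $13$ or so but not beyond — so the naive "$1/3$ of the vertices, size drops by $1$" bound is too weak.

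The main obstacle is exactly this accounting: a size drop of only $1$ while possibly losing two-thirds of the vertices does not close the induction for large $k$. The fix is to be greedier about the size drop. When we draw two chords $va$, $vb$ from $v$, we should pick $a$ and $b$ to be the two neighbors of $v$ that are "far" along the boundary — ideally near the two points at distance roughly $k/2$ from $v$ — so that the middle region bounded by the two chords and a boundary arc has size about $k/2 + 4$, a substantial drop, while still retaining a constant fraction of the interior vertices; one iterates the chord-drawing inside whichever region keeps the vertices, essentially doing a recursive bisection, and the constant $6$ in $6(k-4)$ is precisely what makes the geometric-series bookkeeping of "size halves, vertices drop by a bounded factor" terminate at a face of size $\leq 4$ still containing a vertex. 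I would set up the induction as: if $F$ has size $k$ and $\geq 6(k-4)$ interior vertices, either some drawn chord configuration yields a sub-face of size $4$ (or $3$) with an interior vertex — done by Lemma~\ref{triangle} — or it yields a sub-face $F'$ of size $k' < k$ with $\geq 6(k'-4)$ interior vertices, and recurse; verifying the inequality $6(k-4) \geq 3 \cdot 6(\lceil k/2\rceil + c - 4)$ type bound for the appropriate constant $c$ coming from the chord lengths is the routine-but-delicate calculation I would not grind through here, but it is where the specific constant $6$ (as opposed to, say, $2$) is used.
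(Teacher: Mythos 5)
Your plan shares the paper's high-level shape (induction on $k$ via Lemma~\ref{mex} and Lemma~\ref{triangle}), but there are three concrete problems that keep it from working.

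First, you apply Lemma~\ref{mex} to a vertex $v$ \emph{on the boundary} of $F$. But Lemma~\ref{mex} requires $v$ not to be a vertex of $H$, and a boundary vertex of $F$ is in $H$ with degree at least two (by the minimum-degree hypothesis), so neither Lemma~\ref{mex} nor Lemma~\ref{one} gives you chords from $v$. The paper instead applies Lemma~\ref{mex} to the \emph{interior} vertices $u_1,\ldots,u_t$ of $F$, which are genuinely absent from $H$. This changes the geometry: an interior vertex's two chords split $F$ into exactly two sub-faces $F_s, F_r$ with $|F_s|+|F_r|=k+4$, rather than your three-region picture.

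Second, you write ``we should pick $a$ and $b$ to be the two neighbors of $v$ that are `far' along the boundary.'' You cannot: Lemma~\ref{mex} (and Lemma~\ref{one}) only asserts the \emph{existence} of such edges; it gives you no control over where they land. Worse, the hardest case is precisely when every interior vertex's two chords land on two \emph{consecutive} boundary vertices, so the split is degenerate. You dismiss this as ``a degenerate case one rules out or handles separately,'' but it genuinely occurs and is the bulk of the paper's proof (Case~3 of the induction, and the whole base case $k=5$): by pigeonhole over the $k$ consecutive pairs (using $6(k-4)>k$ for $k\ge 5$), two interior vertices $u_1,u_2$ go to the same consecutive pair $v_1,v_2$; then either the four edges are noncrossing and Lemma~\ref{triangle} finishes, or one builds $H'=H\cup\{u_1v_1,u_1v_2,u_2v_2\}$ and applies Lemma~\ref{one} to $u_2$, with case analysis on where the new edge lands.

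Third, the arithmetic you hope is ``routine-but-delicate'' actually fails. The bound $6(k-4)\ge 3\cdot 6(\lceil k/2\rceil+c-4)$ simplifies to $k\le 16-6c$, so it does not hold for large $k$; more generally, ``size halves, retain a constant fraction of vertices'' loses vertices at rate $3^{\log_2 k}=k^{\log_2 3}\gg k$, which outpaces the linear supply $6(k-4)$. The paper never pays a constant-fraction loss. Its accounting is tight: when a chord pair splits $F$ into $F_s, F_r$ with $s+r=k+4$, one has $6(s-4)+6(r-4)=6(k-4)$ exactly, and the only interior vertex not in either sub-face is the single vertex $u_i$ drawing the chords. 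If both sub-faces had too few vertices, the total would be at most $\bigl(6(s-4)-1\bigr)+\bigl(6(r-4)-1\bigr)+1=6(k-4)-1$, a contradiction. That identity $s+r=k+4$, together with the ``lose only one vertex'' feature, is the missing ingredient that makes the linear threshold $6(k-4)$ self-reproducing under recursion.
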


\begin{proof}

We proceed by induction on $k$, the size of $F$.  For the base case $k = 5$, since $H$ has minimum degree two, the boundary of $F$ must be simple $5$-cycle.  Let $v_1,\ldots, v_5$ be the vertices along the boundary of $F$ appearing in clockwise order.  Let $u_1,\ldots ,u_6$ be the vertices of $G$ in the interior of $F$.  By applying Lemma \ref{mex} to $u_i$ and the plane graph $H$, we obtain two edges emanating out of $u_i$ to the boundary of $F$, whose interior lies completely inside of $F$. If the endpoints of these edges have distance more than one along the boundary of $F$, then we have generated a 4-face inside of $F$ and we are done.  Therefore, we can assume that for each $u_i$, the two edges emanating out of it obtained from Lemma \ref{mex} have endpoints at distance one (consecutive) along the boundary of $F$. 

Since $|F| = 5$, by the pigeonhole principle, there are two vertices, say $u_1$ and $u_2$, such that the two edges emanating out of $u_1$ and $u_2$ obtained from Lemma \ref{mex} go to the same two consecutive vertices, say $v_1,v_{2}$.  If these 4 edges are non-crossing, then we obtain a triangle with a vertex inside of it.  See Figure \ref{fig:inside1}.  By Lemma \ref{triangle}, we obtain a 4-face inside of $F$ and we are done.  Therefore, without loss of generality, we can assume that edges $u_2v_1$ and $u_1v_2$ cross. 

Let $H' = H\cup\{u_1v_1,u_1v_2,u_2v_2\}$, and let $F'$ be the face such that $u_2$ lies on the boundary of $F'$.   See Figure \ref{fig:inside2}.  Since $u_2$ has degree one in $H'$, we apply Lemma \ref{one} to $u_2$ and $H'$ to obtain an edge $u_2v_i$ emanating out of $u_2$ to the boundary of $F'$, whose interior lies in $F'$.

\begin{figure} 
     \centering
     \begin{subfigure}[b]{0.25\textwidth}
         \centering
         \includegraphics[width=\textwidth]{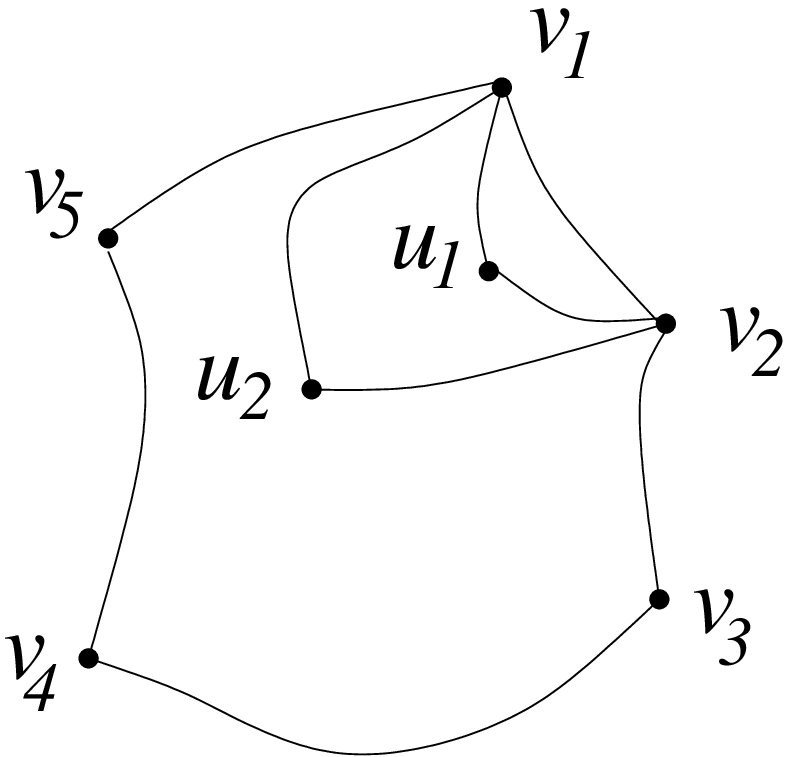}
         \subcaption{Non-crossing edges.}
         \label{fig:inside1}
     \end{subfigure}
     \hspace{2cm}
     \begin{subfigure}[b]{0.25\textwidth}
         \centering
         \includegraphics[width=\textwidth]{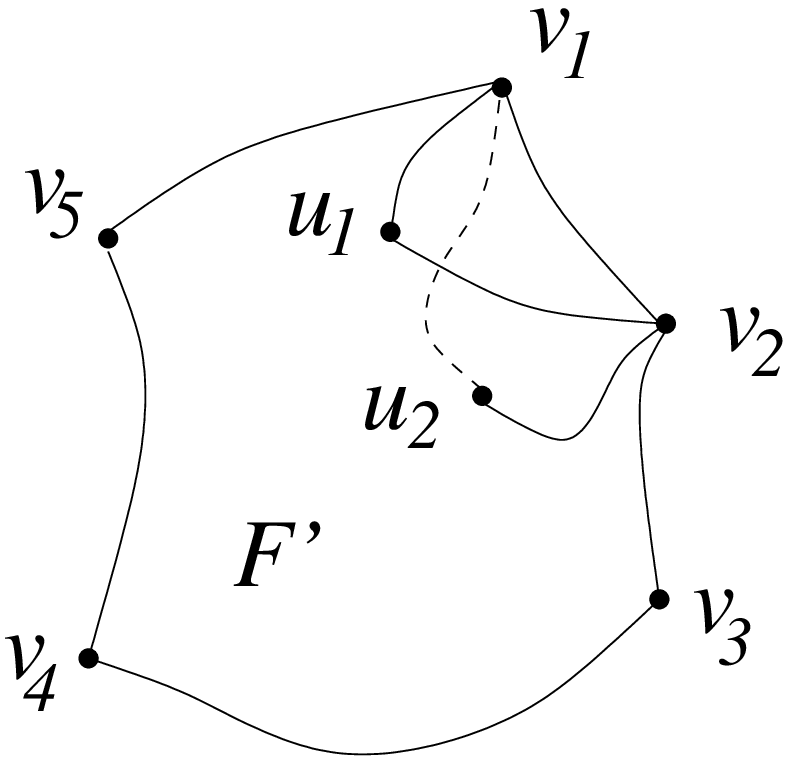}
         \subcaption{Plane subgraph $H'$.}
         \label{fig:inside2}
     \end{subfigure}
        \caption{Finding a 4-face inside a 5-face.}
        \label{fig:inside12}
\end{figure}

If $v_i = v_3$, then we obtain a 4-face inside of $F$ by following the sequence of vertices $(v_3,u_2,v_1,v_2)$ in $G$.  If $v_i = v_4$, then we obtain a 4-face inside of $F$ by following the sequence vertices $(v_4,u_2,v_2,v_3)$ in $G$.   If $v_i = v_5$, then we obtain the 4-face inside of $F$ by following sequence vertices $(v_5,v_1,v_2,u_2)$ in $G$.  Finally, if $v_i = u_1$, then by following the sequence vertices $(u_2,u_1,v_1,v_2)$ in $G$, we obtain a 4-face inside of $F$.  

For the inductive step, assume that the statement holds for all $k' < k$.  Let $F$ be a face of $H$ such that $|F| = k$, and let $(v_1, v_2,\ldots, v_k,v_1)$ be the closed walk(s) along the entire boundary of $F$.  Set $t = 6(k-4)$, and let $u_1,\ldots, u_t$ be the vertices of $G$ that lie in the interior of $F$.  For each $u_i$, we apply Lemma \ref{mex}, with respect to $H$, to obtain two edges emanating out of $u_i$ to the boundary of $F$, such that their interior lies inside of $F$.  The proof now falls into the following cases.

\medskip

\noindent \emph{Case 1.}  Suppose there is a $u_i$ such that the two edges emanating out of $u_i$ obtained from Lemma~\ref{mex} have endpoints at distance two along the boundary of $F$.   Then we have created a 4-face inside of $F$ and we are done. 

\medskip

\noindent \emph{Case 2.}  Suppose there is a vertex $u_i$ such that the two edges emanating out of $u_i$ obtained from Lemma \ref{mex} have endpoints at distance at least 3.  Then these two edges emanating out of $u_i$ partitions $F$ into two parts, $F_s$ and $F_r$, such that $|F_s| = s, |F_r| = r$, $5 \leq s,r \leq k-1$ and $s + r = k  + 4$.  By the pigeonhole principle, $G$ has at least $6(s - 4)$ vertices inside of $F_s$ or $6(r-4)$ vertices inside $F_r$.  Indeed, otherwise the total number of vertices inside of $F$ (including vertex $u_i$) is at most

\[6(s - 4) - 1 + 6(r - 4) - 1 + 1 = 6(k-4) - 1,\]

\noindent contradiction.  Hence, we can apply induction to $F_s$ or $F_r$ to obtain a 4-face inside of $F$ and we are done.
\medskip

\begin{figure} 
     \centering
     \begin{subfigure}[b]{0.25\textwidth}
         \centering
         \includegraphics[width=\textwidth]{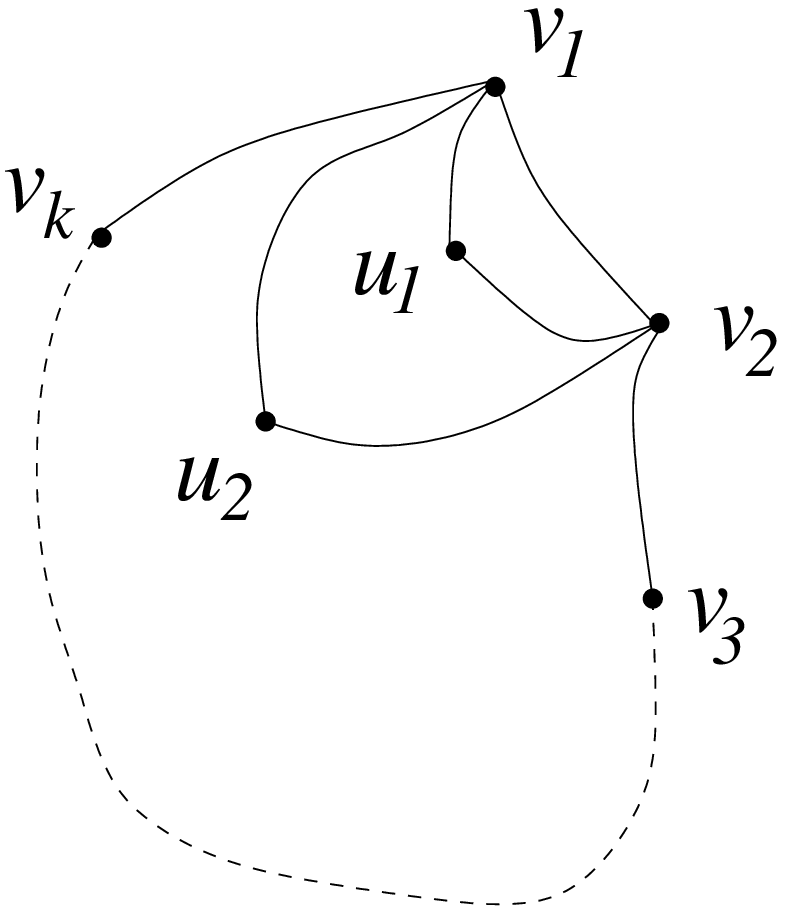}
         \subcaption{Non-crossing edges.}
         \label{fig:insidek1}
     \end{subfigure}
     \hspace{2cm}
     \begin{subfigure}[b]{0.25\textwidth}
         \centering
         \includegraphics[width=\textwidth]{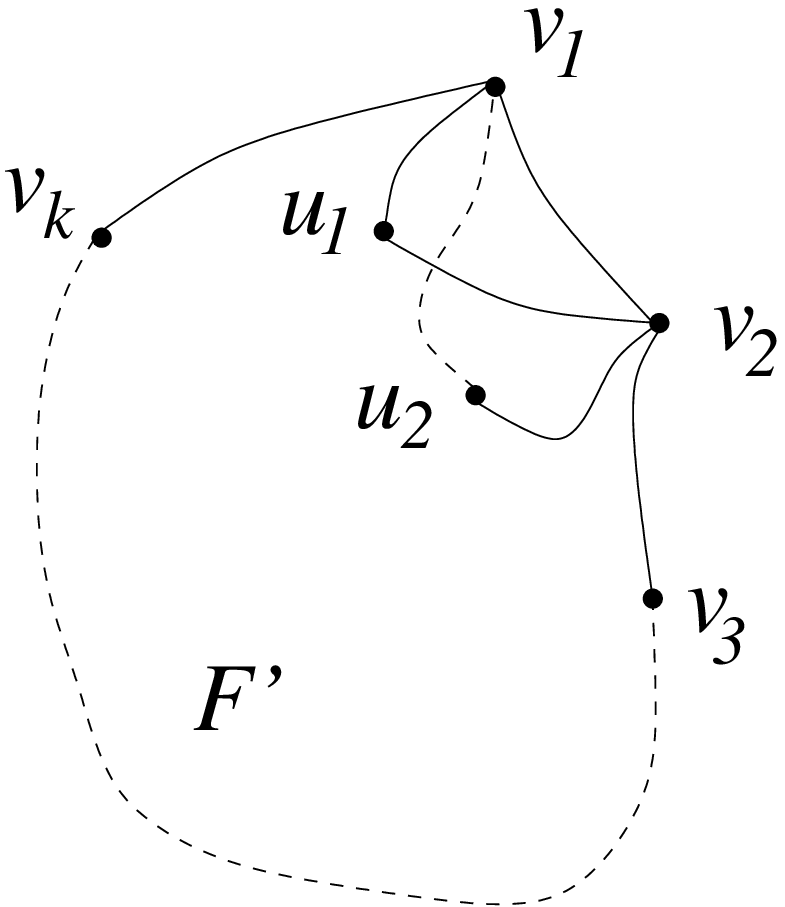}
         \subcaption{Plane subgraph $H'$.}
         \label{fig:insidek2}
     \end{subfigure}
        \caption{Finding a 4-face inside a face of size $k$.}
        \label{fig:insidek12}
\end{figure}

\noindent \emph{Case 3.} Assume for each $u_i$, the two edges emanating out of $u_i$ obtained from Lemma \ref{mex} have endpoints that have distance one along the boundary of $F$ (consecutive vertices along $F$).  Since $t = 6(k-4) > k$, by the pigeonhole principle, there are two vertices, say $u_1$ and $u_2$, such that the two edges emanating out of $u_1$ and $u_2$ obtained from Lemma \ref{mex} go to the same two vertices, say $v_1,v_{2}$.  If these four edges are noncrossing, then we have a triangle with a vertex inside.  By Lemma~\ref{triangle}, we obtain a 4-face inside of $F$ and we are done. See Figure \ref{fig:insidek1}.  Therefore, without loss of generality, we can assume that edges $u_1v_2$ an $u_2v_1$ cross.   

Let $H' = H \cup \{u_1v_1,u_1v_2,u_2v_2\}$, which implies that $u_2$ has degree one in $H'$.  Let $F'$ be the face that contains $u_2$ on its boundary.  See Figure \ref{fig:insidek2}.  We apply Lemma \ref{one} to $u_2$ and the plane graph $H'$, to obtain an edge $u_2v_i$ whose interior lies inside of $F'$ and $v_i$ lies on the boundary of $F'$.  If $v_i = v_3$, then we obtain a 4-face inside of $F$ by following the sequence of vertices $(u_2,v_1,v_2,v_3)$ in $G$.   If $v_i = u_1$, then we obtain a 4-face inside of $F$ by following the sequence of vertices $(u_2,u_1,v_1,v_2)$ in $G$.  If $v_i = v_k$, then again, we obtain a 4-face inside of $F$ by following the sequence of vertices $(u_2,v_k,v_1,v_2)$ in $G$.  

Finally, if $v_i \neq v_k,u_1,v_3$, then either $u_2v_2\cup u_2v_i$ or $u_2v_1\cup u_2v_i$ partitions $F$ into two parts, $F_s$ and $F_r$, such that $|F_s| = s$, $|F_r| = r$, where $5 \leq s,r \leq k-1$ and $s +r = k  + 4$.  By following the arguments in Case 2, we can apply induction on $F_s$ or $F_r$ to obtain a 4-face inside of $F$.  This completes the proof.  \end{proof}

\section{Pairwise disjoint 4-faces in simple drawings}\label{maindis}

\begin{proof}[Proof of Theorem \ref{main}]

Let $G = (V, E)$ be a complete $n$-vertex simple topological graph. We can assume that $n \geq 40$ since otherwise the statement is trivial.  Notice that the edges of $G$ divide the plane into several cells (regions), one of which is unbounded. We can assume that there is a vertex $v_0 \in  V$ such that $v_0$ lies on the boundary of the unbounded cell. Indeed, otherwise we can project $G$ onto a sphere, then choose an arbitrary vertex $v_0$ and then project $G$ back to the plane such that $v_0$ lies on the
boundary of the unbounded cell.  Moreover, the new drawing is isomorphic to the original one as topological graphs. 

Consider the topological edges emanating out from $v_0$, and label their endpoints $v_1,\dots , v_{n-1}$ in clockwise order.    For convenience, we write $v_i \prec v_j$ if $i < j$.   Given subsets $U,W \subset \{v_1,\dots, v_{n-1}\}$, we write $U\prec W$ if $u \prec w$ for all $u \in U$ and $w \in W$.  We start by partitioning our vertex set

\[\mathcal{P}: V(G) = V_0\cup V_1 \cup \cdots \cup V_{\lfloor \frac{n-1}{5}\rfloor},\]

\noindent  such that for $j < \lfloor \frac{n-1}{5}\rfloor$, we have \[V_j = \{v_{5j + 1}, v_{5j + 2}, v_{5j + 3}, v_{5j + 4}, v_{5j + 5}\},\]

\noindent and $\left|V_{\lfloor \frac{n-1}{5}\rfloor}\right| < 5$.  Let $H\subset G$ be a planar subgraph of $G$, and let $T,T'$ be two triangles in $H$ that are incident to $v_0$.   We say that $T$ and $T'$ are \emph{adjacent} if $V(T) = \{v_0,v_i,v_j\}$ and $V(T') = \{v_0,v_j,v_k\}$ such that $v_i \prec v_j \prec v_k$, and the edges $v_0v_i,v_0v_j,v_0v_k$ appear consecutively in clockwise order among the edges emanating out of $v_0$ in $H$.  See Figures \ref{fig:c4} and \ref{fig:c2} for an example.

In what follows, we will construct a plane subgraph $H\subset G$ such that, at each step, we use Lemma \ref{mex} to add at least one edge within the vertex set $\{v_1,\ldots, v_{n-1}\}$.  The goal at each step is to add an edge without creating any empty triangles incident to $v_0$.  If we are forced to create such an empty triangle, we then create another triangle incident to $v_0$ that is adjacent to it, so that we obtain a 4-face. We now give the details of this process.

\begin{lemma}\label{plane}

For each $i \in \{0,1,\ldots, \lfloor n/12\rfloor \}$, there is a plane subgraph $H_i \subset G$ such that $V(H_i) = V(G)$ and $H_i$ satisfies the following properties.

\begin{enumerate}
 
    \item $H_i$ has at least $i$ edges within the vertex set $\{v_1,\ldots, v_{n-1}\}$.
    
    \item The number of parts $V_j \in \mathcal{P}$ with the property that each vertex in $V_j$ has degree one in $H_i$ is at least $\lfloor (n-1)/5\rfloor - 2i$.
    
    \item If the vertex set $\{v_0,v_k,v_{\ell}\}$ induces an empty triangle $T$ in $H_i$, then both vertices $v_k,v_{\ell}$ must lie in the same part $V_j \in \mathcal{P}$ and $\ell = k + 1$.  Moreover, given that such an empty triangle $T$ exists, there must be another triangle $T'$ adjacent to $T$ in $H_i$, such that $V(T') = \{v_0,v_t,v_{t'}\}$ and $v_t,v_{t'} \in V_j$.
    
\item If the edge $v_0v_t$ is not in $H_i$, then $v_t$ is an isolated vertex in $H_i$.
\end{enumerate}

\end{lemma}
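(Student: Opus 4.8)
The plan is to construct the sequence $H_0, H_1, \ldots, H_{\lfloor n/12\rfloor}$ by induction on $i$, where $H_0$ is the "star" plane graph consisting of all edges $v_0 v_j$ for $j = 1, \ldots, n-1$ (together with the remaining vertices as isolated points if we want $V(H_0) = V(G)$; but actually every $v_j$ is incident to $v_0$, so there are no isolated vertices, and properties (1)--(4) all hold trivially for $i=0$: there are $0$ edges inside $\{v_1,\ldots,v_{n-1}\}$, every part $V_j$ consists of degree-one vertices, there are no empty triangles incident to $v_0$ since there are no triangles at all, and every $v_t$ has $v_0 v_t \in H_0$). The heart of the argument is the inductive step: given $H_i$ with $i < \lfloor n/12 \rfloor$, produce $H_{i+1}$.

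For the inductive step, I would proceed as follows. Since $i < \lfloor n/12\rfloor$, property (2) guarantees at least $\lfloor (n-1)/5 \rfloor - 2i > \lfloor(n-1)/5\rfloor - 2\lfloor n/12\rfloor \geq 1$ parts $V_j$ all of whose vertices have degree one in $H_i$; pick such a part $V_j = \{v_{5j+1}, \ldots, v_{5j+5}\}$ (or more precisely a part that, among all such parts, is extremal in the $\prec$-order, or just any such part). The five vertices of $V_j$ are leaves attached to $v_0$, and by property (3) the only empty triangles incident to $v_0$ live strictly inside a single part with consecutive indices, so the edges $v_0 v_{5j+1}, \ldots, v_0 v_{5j+5}$ together with $v_0$ and the five leaves bound a face $F$ of $H_i$ (or rather the region between consecutive such edges is a face). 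I want to add an edge among $\{v_1,\ldots,v_{n-1}\}$ inside this configuration. Applying Lemma~\ref{mex} to the vertex $v_{5j+3}$ (the middle one) and the connected plane subgraph $H_i$ restricted appropriately — or more carefully, applying Lemma~\ref{one} since $v_{5j+3}$ has degree one — we obtain an edge from $v_{5j+3}$ to the boundary of its face; by choosing the right face (the one "between" $v_0 v_{5j+2}$ and $v_0 v_{5j+4}$, using that these edges are consecutive in clockwise order around $v_0$ after possibly first adding some edges among $V_j$) we route this new edge to land on a vertex of $\{v_1,\ldots,v_{n-1}\}$. We add this one edge to form $H_{i+1}$. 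We then verify: (1) holds since we added one more edge inside $\{v_1,\ldots,v_{n-1}\}$; (2) holds because the only part that can lose its "all-degree-one" status is $V_j$ itself, and possibly the part containing the landing vertex — at most $2$ parts change, so the count drops by at most $2$; (4) holds because we never delete the edge $v_0 v_t$ for any $t$, and we only added an edge between two already-non-isolated vertices. The delicate point is (3): adding the new edge $v_{5j+3} w$ might create an empty triangle incident to $v_0$. If it does, that triangle must be $\{v_0, v_{5j+3}, v_{5j+2}\}$ or $\{v_0, v_{5j+3}, v_{5j+4}\}$ — i.e., $w \in \{v_{5j+2}, v_{5j+4}\}$ and the new triangle is empty; in that case $w$ and $v_{5j+3}$ lie in the same part $V_j$ with consecutive indices (good for the first half of (3)), and we must exhibit an adjacent triangle $T'$ inside the same part. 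To get $T'$, we add a \emph{second} edge inside $V_j$ using Lemma~\ref{one} again on a remaining leaf of $V_j$ ($v_{5j+1}$ or $v_{5j+5}$), routing it to $v_{5j+2}$ or $v_{5j+4}$ so as to form the required adjacent triangle $\{v_0, v_t, v_{t'}\}$ with $v_t, v_{t'} \in V_j$; this is why each inductive step is allowed to "consume" two units in the budget of property (2) and why we only run $i$ up to $\lfloor n/12\rfloor$ rather than $\lfloor (n-1)/5\rfloor$.

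The main obstacle, as I see it, is the bookkeeping in property (3) combined with ensuring that the new edges can always be routed to land on other indexed vertices $v_t$ rather than on $v_0$ — and that when we are forced into an empty triangle we genuinely have the room inside $V_j$ to build its adjacent partner without disturbing other parts or other already-guaranteed empty triangles. Concretely, I would argue that within a fixed part $V_j$ with its five leaves, after adding at most a bounded number ($\leq 4$) of edges among $\{v_{5j+1},\ldots,v_{5j+5}\}$ one reaches a configuration that is locally a triangulated pentagon hanging off $v_0$, in which the only empty triangle incident to $v_0$ (if any) is a single one with an explicit adjacent triangle also inside $V_j$; this is a finite case check on simple topological drawings of $K_6$ analogous to Lemmas~\ref{triangle} and~\ref{key}. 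Once that local structure is pinned down, properties (1)--(4) propagate cleanly, and after $\lfloor n/12\rfloor$ steps we have $H_{\lfloor n/12\rfloor}$ with $\Omega(n)$ edges inside $\{v_1,\ldots,v_{n-1}\}$, which is what the subsequent argument (outside the scope of this lemma) will turn into $\Omega(n^{1/3})$ pairwise disjoint 4-faces. I would also double-check the arithmetic $\lfloor(n-1)/5\rfloor - 2\lfloor n/12\rfloor \geq 1$ for $n \geq 40$, which is where the hypothesis $n \geq 40$ is used.
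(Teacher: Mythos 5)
Your base case and the opening move of the inductive step --- pick a part $V_j$ of five fresh leaves and apply Lemma~\ref{one} to the middle leaf $u_3 = v_{5j+3}$ --- match the paper, but the rest of the inductive step has a genuine gap. You cannot ``route'' a Lemma~\ref{one} edge to a landing vertex of your choosing; the lemma hands you one specific edge and you must deal with wherever it lands, which is why the paper's proof is a careful case analysis rather than a choice of target. The hard case your sketch does not handle is this: the edge from $u_3$ lands on $u_4$, the triangle $\{v_0,u_3,u_4\}$ is empty, you apply Lemma~\ref{one} a second time to $u_2$, and the new edge lands on $u_1$ with $\{v_0,u_1,u_2\}$ also empty. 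Those two empty triangles share only $v_0$, so neither is adjacent to the other, and adding both $u_1u_2$ and $u_3u_4$ would violate property~(3) with no adjacent partner available inside $V_j$.

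The paper's resolution of that case is to \emph{delete} the vertex $u_3$ and its spoke $v_0u_3$, re-apply Lemma~\ref{one} to $u_4$ inside the enlarged face, and split into subcases according to whether the resulting edge $u_4v_t$ crosses $v_0u_3$; in some of those subcases $v_0u_3$ stays deleted and $u_3$ becomes an isolated vertex of $H_{i+1}$. This is precisely why property~(4) is stated as a conditional permitting $v_0v_t\notin H_i$. Your justification ``(4) holds because we never delete the edge $v_0v_t$ for any $t$'' is therefore not what the working construction does, and it signals that you have misjudged why (4) is there at all. Likewise, the appeal to $V_j$ eventually becoming a ``triangulated pentagon hanging off $v_0$'' is off: the Lemma~\ref{one} edges routinely leave $V_j$ entirely (that is the paper's easy Case~1, where one just adds the edge and stops), and the paper adds at most two new edges, and possibly removes one, in any single step. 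Your bookkeeping for properties~(1) and~(2) is fine, but without carrying out the actual case analysis --- including the edge deletion --- the proof of~(3) and~(4) is not there.
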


 \begin{proof}

We start by setting $H_0$ as the plane subgraph of $G$ consisting of all edges emanating out of $v_0$.  Clearly, $H_0$ satisfies the properties above.  For $i < n/12$, having obtained $H_i$ with the properties described above, we obtain $H_{i + 1}$ has follows.  

Fix a part $V_j  \in \mathcal{P}$ such that each vertex in $V_j$ has degree one in $H_i$ and $|V_j| = 5$.   For simplicity, set $u_i = v_{5j + i}$, for $i \in \{1,\ldots, 5\}$, which implies $V_j = \{u_1,u_2,u_3,u_4,u_5\}$.  Since

\[\lfloor \frac{n-1}{5}\rfloor - 2i \geq \frac{n-1}{5} - \frac{n}{6} > 1,\]

\noindent such a part $V_j \in \mathcal{P}$ exists.  Clearly, all vertices in $V_j$ lie on the boundary of a face $F$ in the plane graph $H_i$. We then apply Lemma \ref{one} to the plane graph $H_i$ and the vertex $u_3$, and obtain edge $u_3v_k$, whose interior lies within $F$ and $v_k$ is on the boundary of $F$.  We now consider the following cases.

\medskip

\noindent \emph{Case 1.}  Suppose  $v_k \neq u_2, u_4$.  See Figure \ref{fig:vk1}.  We then set $H_{i + 1} = H_i\cup \{u_3v_k\}$.  Clearly, $H_{i + 1}$ is planar. Moreover, the number of edges in $H_{i + 1}$ within the vertex set $\{v_1,\ldots, v_{n -1}\}$ is at least $i + 1$.   Also, the only vertices that no longer have degree one in $H_{i + 1}$ are $u_3$ and $v_k$.  Hence, the number of parts $V_{\ell} \in \mathcal{P}$ with the property that all vertices in $V_{\ell}$ have degree one in $H_{i + 1}$ is at least \[\lfloor \frac{n - 5}{2}\rfloor - 2i - 2 = \lfloor \frac{n - 5}{2}\rfloor  - 2(i + 1).\]  Since $v_k\neq u_2, u_4$, no empty triangles incident to $v_0$ were created.  Also, no edge emanating out of $v_0$ was deleted from $H_i$.  Thus, $H_{i + 1}$ satisfies the conditions described above.

\begin{figure}[ht]
     \centering
     \begin{subfigure}[b]{0.35\textwidth}
         \centering
         \includegraphics[width=\textwidth]{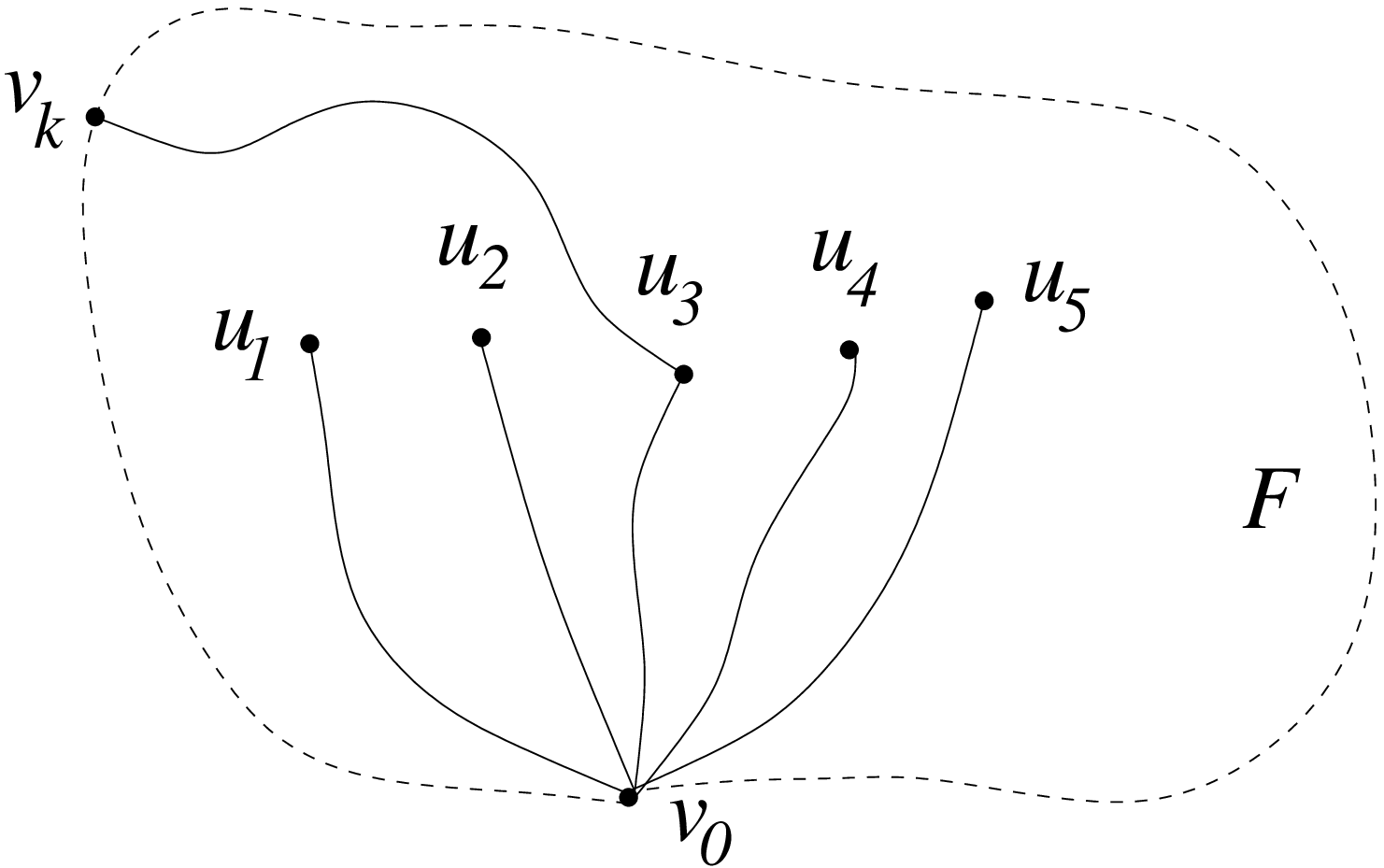}
         \subcaption{$v_k \neq u_2,u_4$.}
         \label{fig:vk1}
     \end{subfigure}
     \hspace{2cm}
     \begin{subfigure}[b]{0.35\textwidth}
         \centering
         \includegraphics[width=\textwidth]{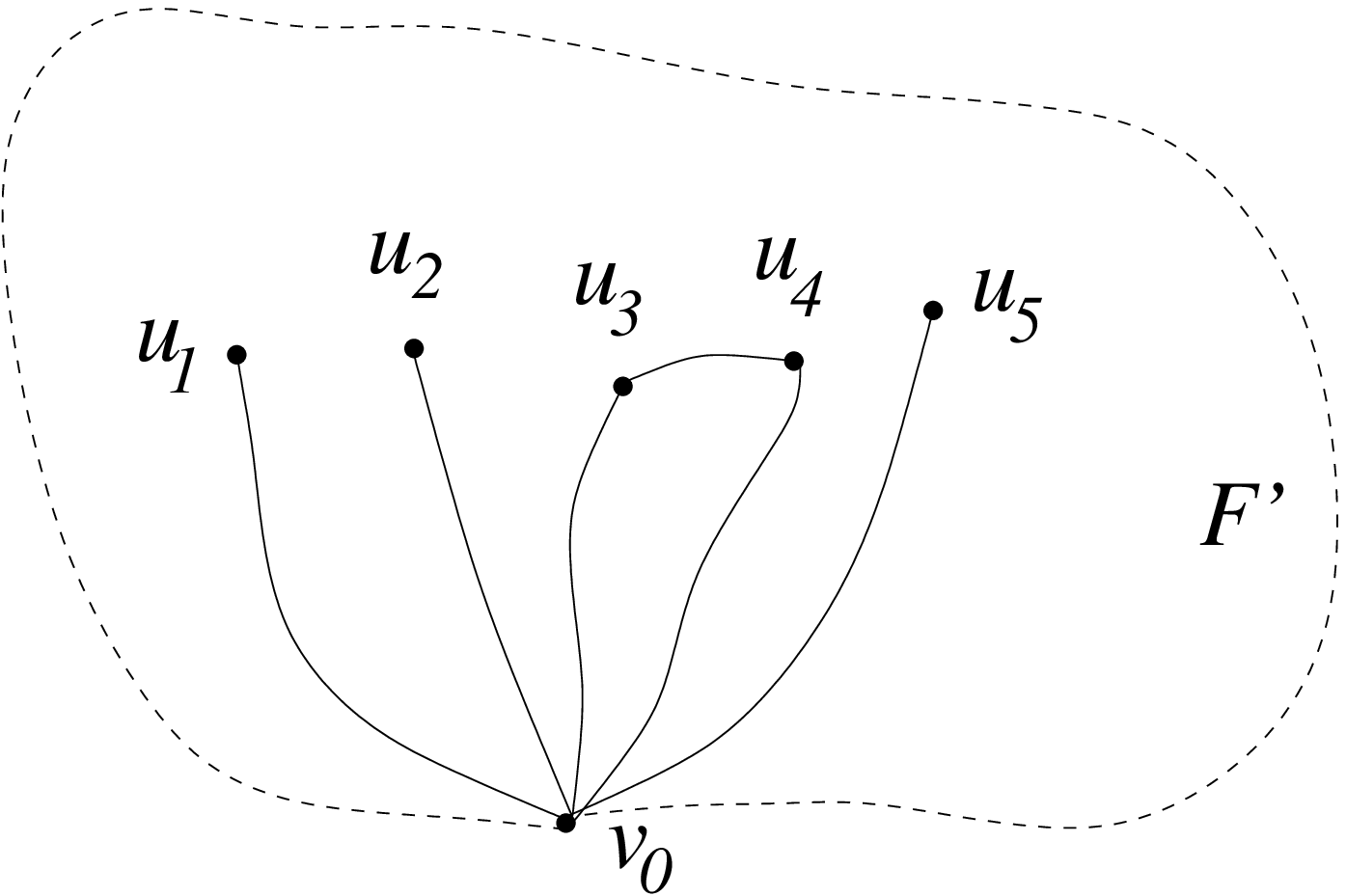}
         \subcaption{$v_k = u_4$.}
         \label{fig:vk2}
     \end{subfigure}
        \caption{Cases 1 and 2 in Lemma \ref{plane}.}
        \label{fig:vkad}
\end{figure}

\medskip

\noindent \emph{Case 2.}  Suppose $v_k= u_2$ or $v_k= u_4$.    Without loss of generality, we can assume $v_k= u_4$, since otherwise a symmetric argument would follow.  If there is a vertex of $G$ inside the triangle $T = \{v_0,u_3,u_4\}$, then we set $H_{i + 1} = H_i\cup\{u_3u_4\}$. By the same arguments as above, $H_{i + 1}$ satisfies the properties described above.

Hence, we can assume that the triangle $T$, where $V(T)= \{v_0,u_3,u_4\}$, is empty in $G$.  Set $H' = H_i\cup \{u_3u_4\}$, and let $F'$ be the face in $H'$ whose boundary contains $u_2$.  See Figure \ref{fig:vk2}.   We apply Lemma \ref{one} to $H'$ and $u_2$ and obtain another edge $u_2v_{\ell}$ whose interior lies inside $F'$.  If $v_{\ell} = u_3$, then we set $H_{i + 1} = H_i\cup \{u_3u_4, u_2u_3\}$, which implies that the empty triangle $T$ is adjacent to triangle $T'$, where $V(T')= \{v_0,u_2,u_3\}$.    By a similar argument as above, $H_{i + 1}$ satisfies the properties above.  If $v_{\ell} \neq u_1,u_3$, then edge $u_2v_{\ell}$ does not create any empty triangles incident to $v_0$ and we set $H_{i + 1} = H_{i}\cup \{u_2v_{\ell}\}$.   By the same argument as above, $H_{i + 1}$ satisfies the desired properties.

Finally, let us consider the case that $v_{\ell} = u_1$.  If the triangle $T'$ is not empty, where $V(T') = \{v_0,u_1,u_2\}$, we set $H_{i + 1} = H_i\cup \{u_1u_2\}$ and we are done by the arguments above.  Therefore, we can assume that the triangle $T'$ is also empty. 

Let $H'' = (H_i\cup \{u_1u_2\})\setminus \{u_3\}$.  Let $F''$ be the face whose boundary contains $u_4$ in $H''$.   See Figure \ref{fig:ad0}.  We apply Lemma \ref{one} to $H''$ and the vertex $u_4$ to obtain edge $u_4v_t$ whose interior lies inside $F''$.  We now examine $H_i\cup\{u_1u_2,u_3u_4,u_4v_t\}$. The proof now falls into the following cases.

\begin{figure}[ht]
     \centering
          \begin{subfigure}[b]{0.3\textwidth}
         \centering
         \includegraphics[width=\textwidth]{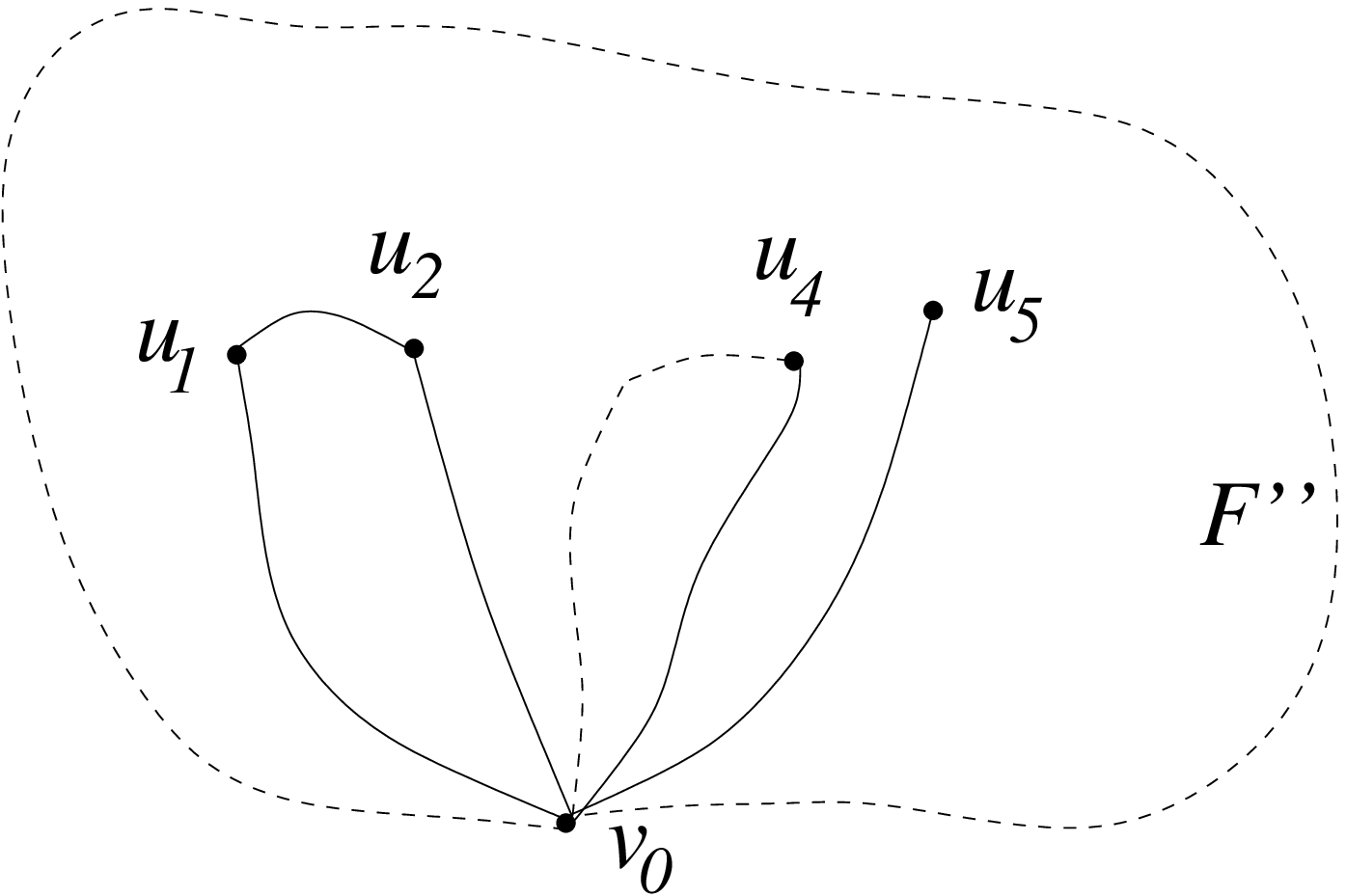}
         \subcaption{$v_{\ell} = u_1$, the plane graph $H''$.}
         \label{fig:ad0}
     \end{subfigure}
     \hspace{1.5cm}
     \begin{subfigure}[b]{0.3\textwidth}
         \centering
         \includegraphics[width=\textwidth]{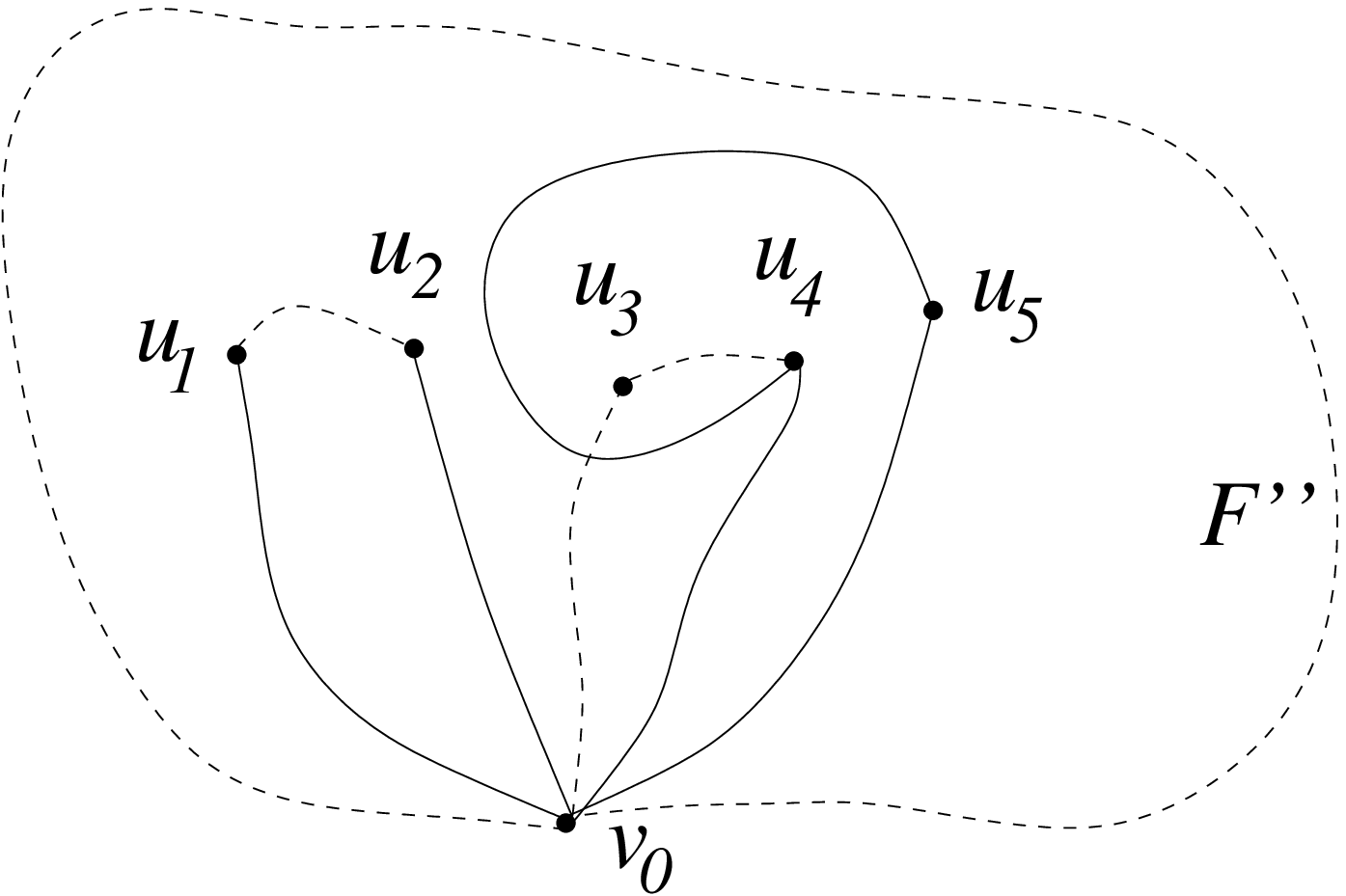}
         \subcaption{$v_t = u_5$, $v_0v_3$ crosses $u_4u_5$.}
         \label{fig:c1}
     \end{subfigure}
     \hspace{.5cm}
     \begin{subfigure}[b]{0.3\textwidth}
         \centering
         \includegraphics[width=\textwidth]{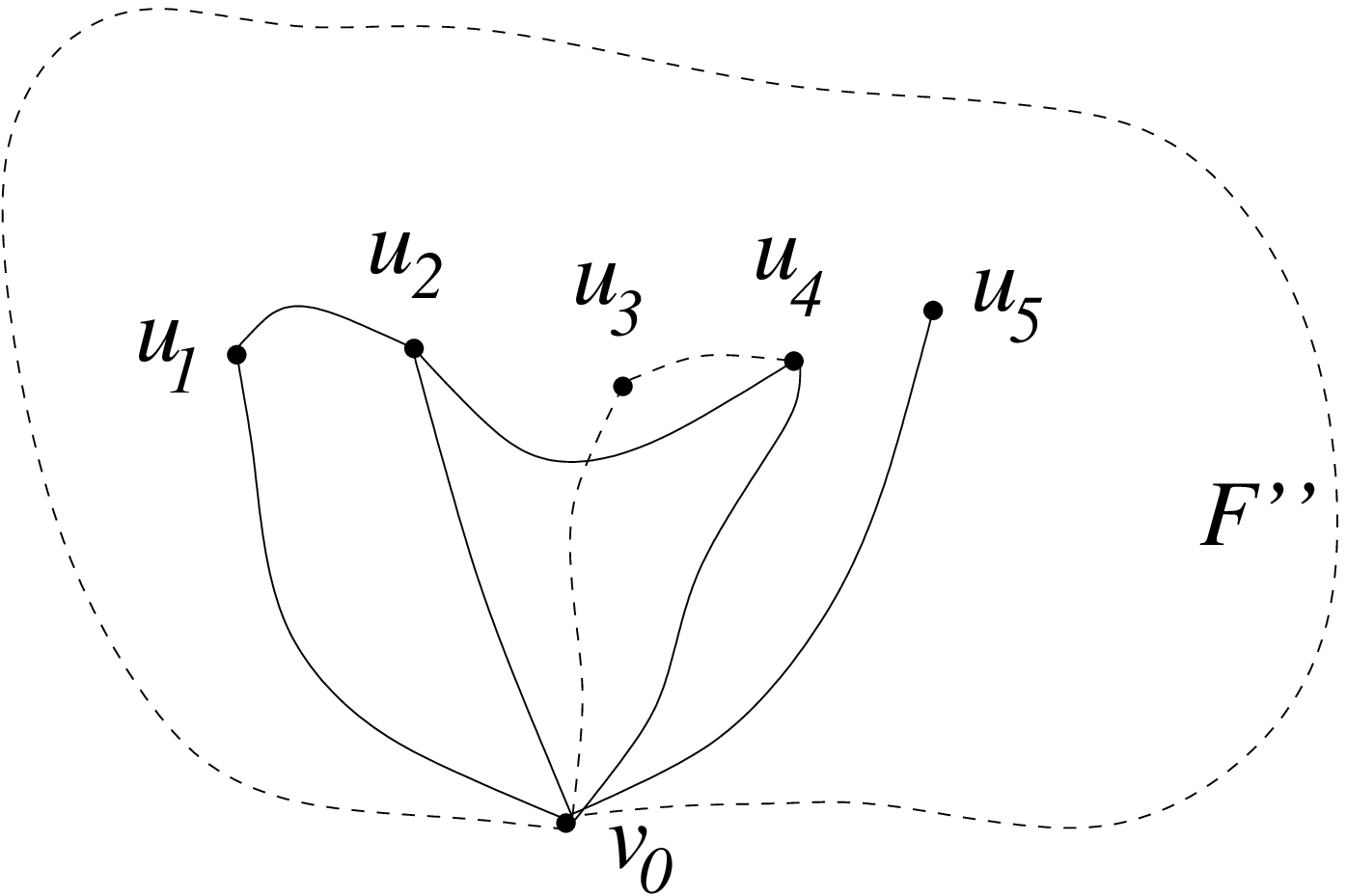}
         \subcaption{$v_t = u_2$}
         \label{fig:c4}
     \end{subfigure}
      \hspace{1.5cm}
           \begin{subfigure}[b]{0.3\textwidth}
         \centering
         \includegraphics[width=\textwidth]{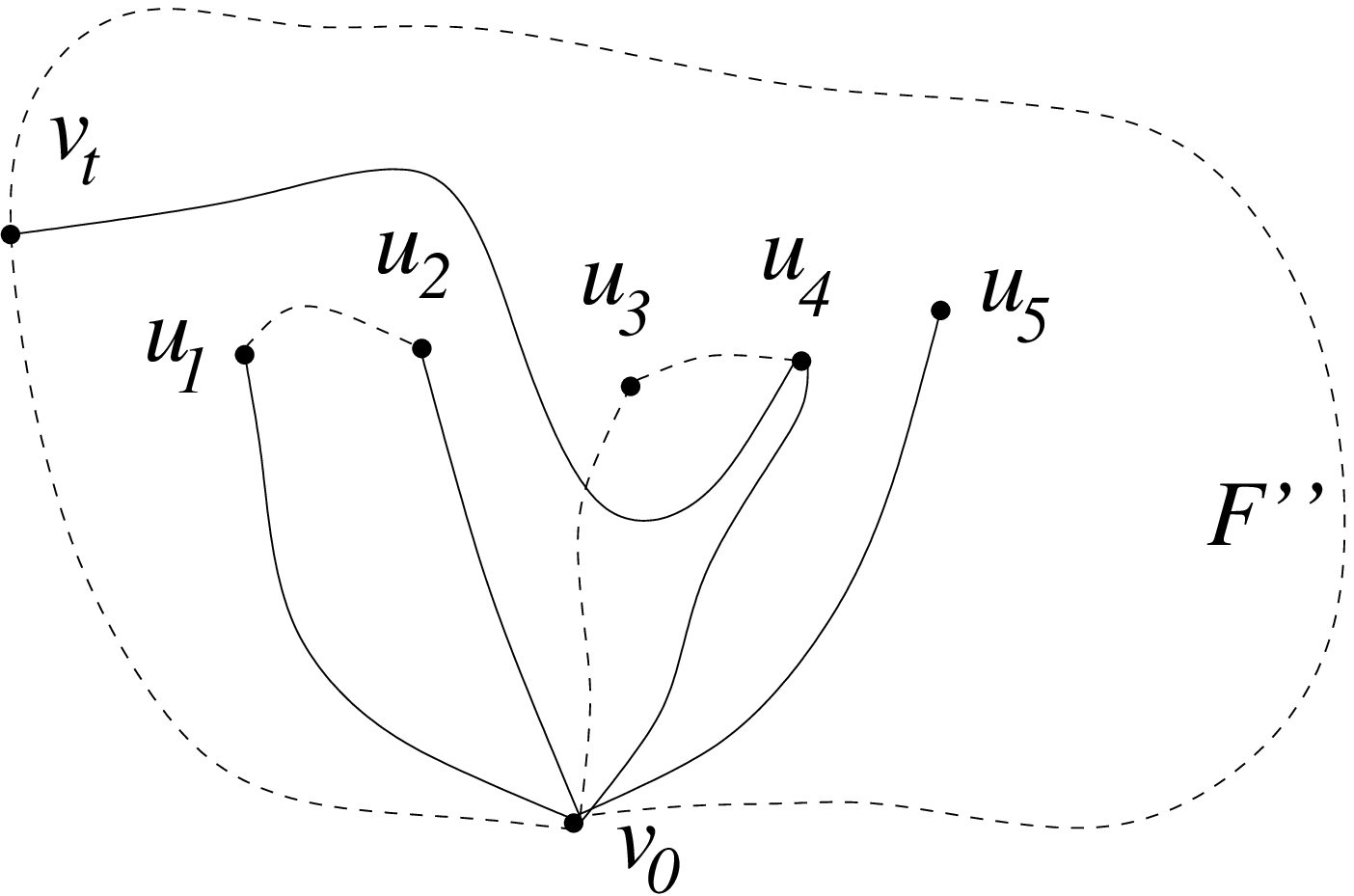}
         \subcaption{$v_t \neq u_2,u_5$}
         \label{fig:c6}
     \end{subfigure}
        \caption{Cases 2.a in Lemma \ref{plane}.  Edge $u_4v_t$ crosses $v_0v_3$.}
        \label{fig:cross}
\end{figure}

\noindent Case 2.a.  Suppose edge $u_4v_t$ crosses edge $v_0u_3$.  If $v_t = u_5$, then $\{v_0,u_4,u_5\}$ induces a non-empty triangle in $G$, so we set $H_{i + 1} = H_i\cup \{u_4u_5\}\setminus \{v_0v_3\}$. Then $u_3$ is an isolated vertex in $H_{i + 1}$ and we did not create any empty triangles incident to $v_0$, and we are done.  See Figure \ref{fig:c1}.  If $v_t = u_2$, then we set $H_{i + 1} = H_i\cup \{u_1u_2,u_2u_4\} \setminus \{v_0u_3\}$.  Then the empty triangle on $\{v_0,u_1,u_2\}$ is adjacent to the triangle on $\{v_0,u_2,u_4\}$ in $H_{i + 1}$, $u_3$ is an isolated vertex, and we are done. See Figure \ref{fig:c4}. Otherwise, if $v_t\neq u_2,u_5$, we set $H_{i + 1} = (H_i \cup \{u_4v_t\})\setminus \{v_0u_3\}$.  Then $u_3$ is an isolated vertex, we do not create any empty triangles incident to $v_0$, and we are done. See Figure \ref{fig:c6}.

\medskip

\noindent Case 2.b.  Suppose edges $v_0u_3$ and $u_4v_t$ do not cross.  If $v_t = u_5$, then we set $H_{i + 1} = H_i \cup \{u_3u_4,u_4u_5\}$, and the empty triangle on the vertex set $\{v_0,u_3,u_4\}$ is adjacent to the triangle on $\{v_0,u_4,u_5\}$, and we are done.  See Figure \ref{fig:c2}. If $v_t \neq u_5$, then we set $H_{i + 1} = H_i\cup \{u_4v_t\}$.  Since we do not create any empty triangles incident to $v_0$, we are done.    See Figure \ref{fig:c3}.  This completes the proof of the statement.
 \end{proof}

\begin{figure}[ht]
     \centering
     \begin{subfigure}[b]{0.35\textwidth}
         \centering
         \includegraphics[width=\textwidth]{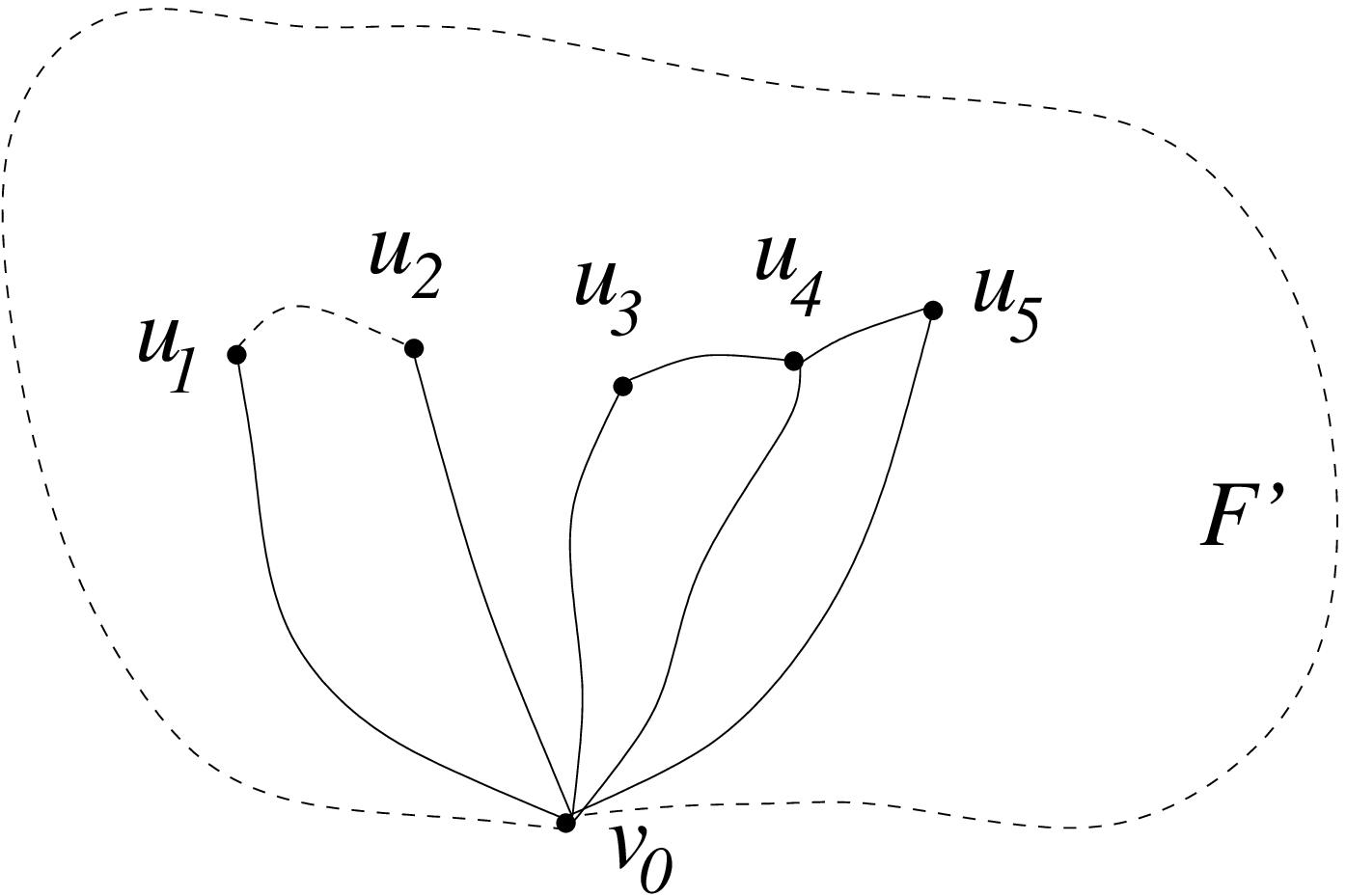}
         \subcaption{$v_t = u_5$}
         \label{fig:c2}
     \end{subfigure}
     \hspace{1cm}
     \begin{subfigure}[b]{0.35\textwidth}
         \centering
         \includegraphics[width=\textwidth]{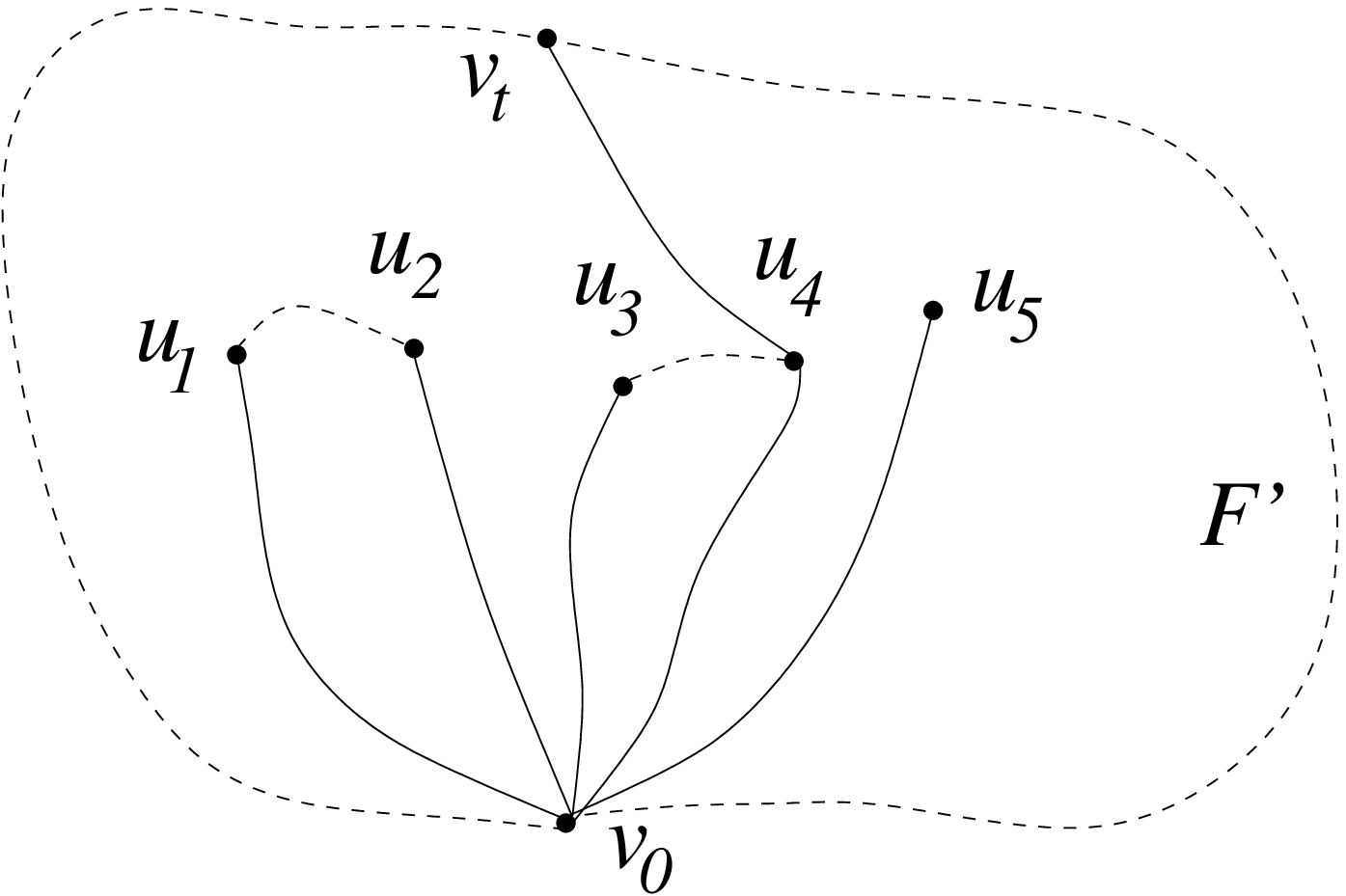}
         \subcaption{$v_t \neq u_5$}
         \label{fig:c3}
     \end{subfigure}
        \caption{Case 2.b in Lemma \ref{plane}.  Edge $u_4v_t$ does not cross $v_0v_3$.}
        \label{fig:disjoint2}
\end{figure}

\medskip

Set $H = H_{\lfloor n/12\rfloor }$.  We now will use the planar graph $H$ and the vertices of $G$ to find many pairwise disjoint 4-faces. If there is a vertex $v_j \in \{v_1,\ldots, v_{n-1}\}$ with degree at least $n^{1/3}$ in $H$, then together with $v_0$, we have a planar $K_{2,\lfloor n^{1/3}\rfloor}$.  This gives rise to $\Omega(n^{1/3})$ pairwise disjoint 4-faces and we are done.

Hence, we can assume that every vertex $v_i \in \{v_1,\ldots, v_{n-1}\}$ has degree at most $n^{1/3}$.  Since there are at least $n/12$ edges induced on the vertex set $\{v_1,\ldots, v_{n-1}\}$ in the plane graph $H$, there is a plane matching $M$ on  $\{v_1,\ldots, v_{n-1}\}$ of size at least $n^{2/3}/16$.   Notice the there is a natural partial ordering $\prec^{\ast}$ on $M$.   Given two edges $v_iv_j, v_kv_{\ell} \in M$, we write $v_kv_{\ell} \prec^{\ast} v_iv_{j}$ if $v_i\prec v_k\prec v_{\ell}\prec v_j$.  By Dilworth's theorem, $M$ contains either a chain or antichain of length at least $n^{1/3}/4$ with respect to the partial ordering $\prec^{\ast}$.  The proof now falls into two cases.

\medskip

\noindent \emph{Case 1.}  Suppose we have an antichain $M'$ of size $n^{1/3}/4$.   Let \[M' = \{v_{\ell_1}v_{r_1}, v_{\ell_2}v_{r_2}, \ldots,v_{\ell_t}v_{r_t} \},\] where $t = n^{1/3}/4$  and $\ell_i < r_i$ for all $i$.  Since $H$ is planar, we have

\[\{v_{\ell_1} , v_{r_1}\} \prec \{v_{\ell_2} , v_{r_2}\} \prec \cdots \prec  \{v_{\ell_{t}} , v_{r_{t}}\}.\]

\noindent  See Figure \ref{fig:ant1}.  If at least half of the edges in $M'$ give rise to a non-empty triangle incident to $v_0$, then we apply Lemma~\ref{triangle} to each such triangle to obtain $\Omega(n^{1/3})$ pairwise disjoint 4-faces.  Hence, we can assume at least half of these triangles are empty.  By construction of $H$, each such empty triangle has another triangle adjacent to it.  Since the three edges emanating out of $v_0$ of two adjacent triangles must be consecutive in $H$ (by definition), this corresponds to $\Omega(n^{1/3})$ pairwise disjoint 4-faces.  See Figure \ref{fig:ant2}.

\begin{figure}[ht]
     \centering
     \begin{subfigure}[b]{0.3\textwidth}
         \centering
         \includegraphics[width=\textwidth]{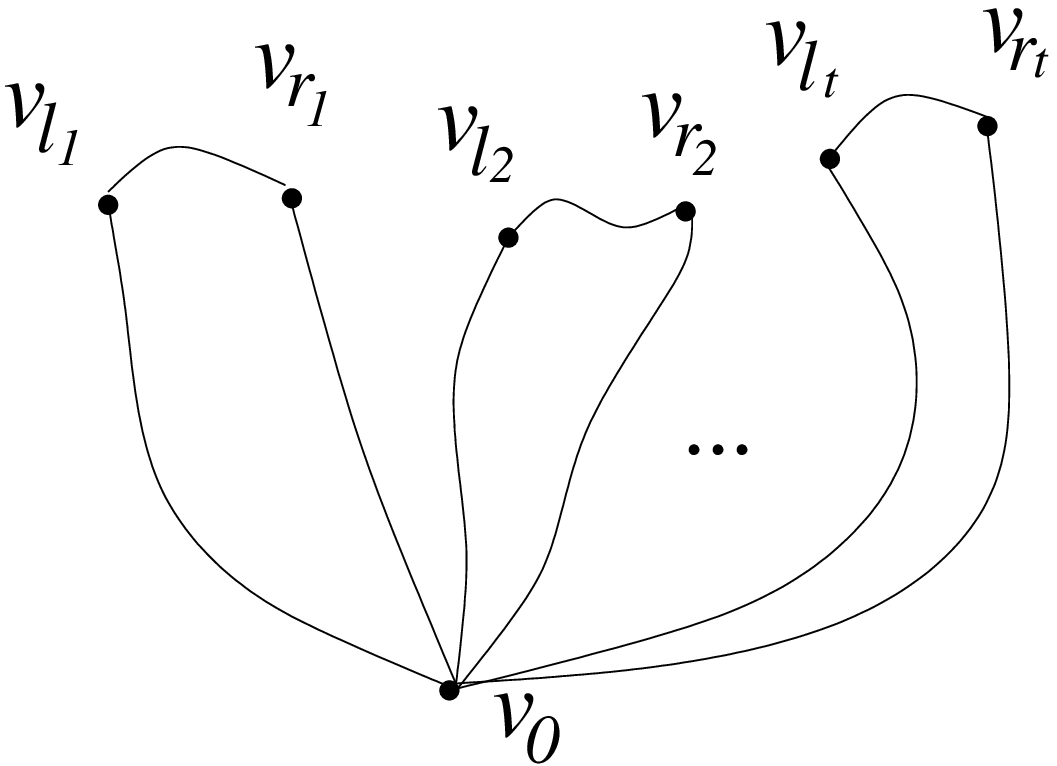}
         \subcaption{Anti-chain of size $t$.}
         \label{fig:ant1}
     \end{subfigure}
     \hspace{.25cm}
     \begin{subfigure}[b]{0.3\textwidth}
         \centering
         \includegraphics[width=\textwidth]{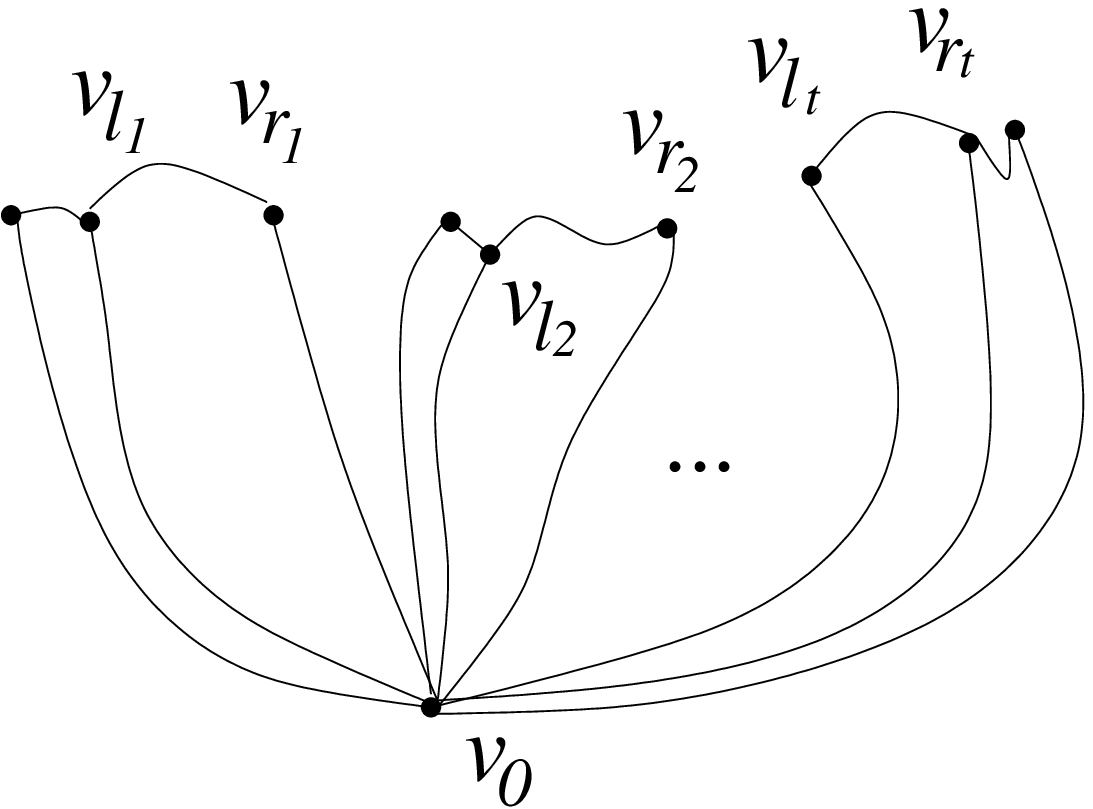}
         \subcaption{Disjoint 4-faces.}
         \label{fig:ant2}
     \end{subfigure}
          \hspace{.25cm}
         \begin{subfigure}[b]{0.3\textwidth}
         \centering
         \includegraphics[width=\textwidth]{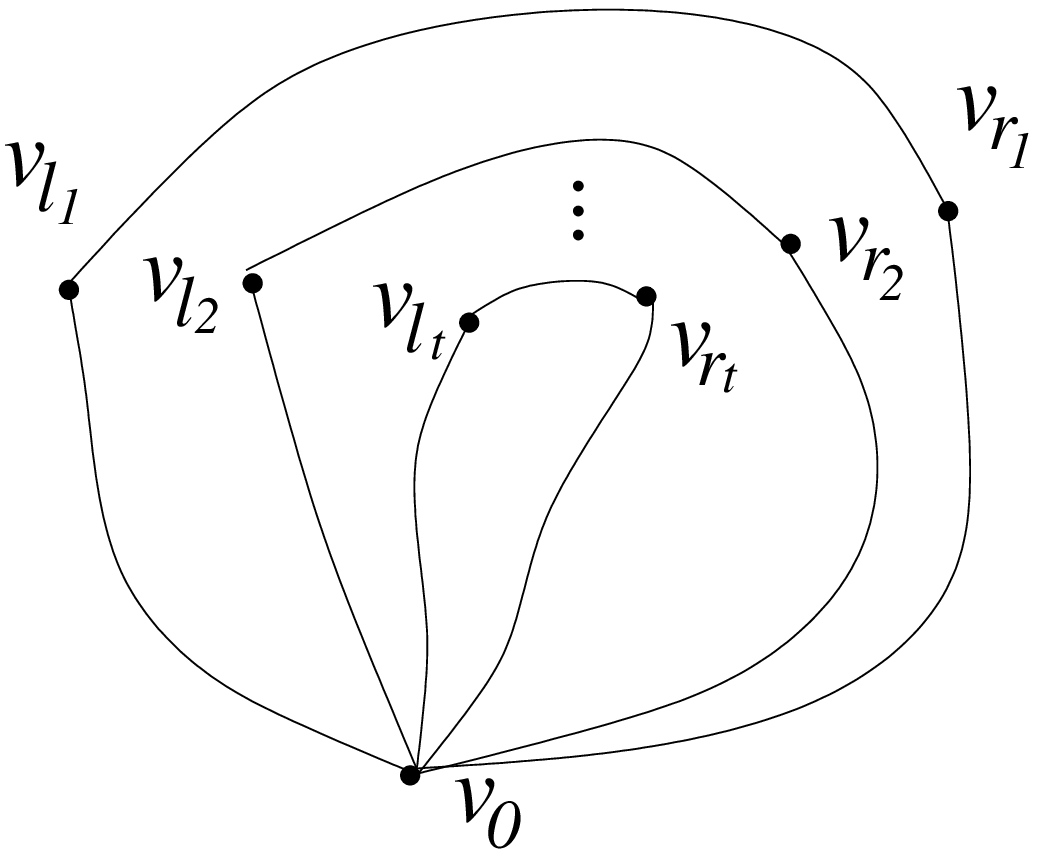}
         \subcaption{Chain of length $t$}
         \label{fig:chain}
     \end{subfigure}
        \caption{Large antichain and chain.}
        \label{fig:disjoint}
\end{figure}

\medskip

\noindent Case 2.  Suppose we have a chain $M'\subset M$ of size $n^{1/3}/4$. Hence, 
 
\[M' = \{v_{\ell_1}v_{r_1}, v_{\ell_2}v_{r_2}, \ldots,v_{\ell_t}v_{r_t} \},\] where $t = n^{1/3}/4$  and we have

\[v_{\ell_{t}}v_{r_{t}} \prec^{\ast}   \cdots\prec^{\ast} v_{\ell_2}v_{r_2} \prec^{\ast} v_{\ell_1}v_{r_1}.\]

 \noindent See Figure \ref{fig:chain}.  Set $M'' \subset M'$ such that $M'' = \{v_{\ell_{7j}}v_{r_{7j}}\}_j$.  Hence, $|M''| \geq \Omega(n^{1/3})$.  Let us consider edges $v_{\ell_7}v_{r_7}$ and  $v_{\ell_{14}}v_{r_{14}}$ from $M''$, and the region $F$ enclosed by the six edges.

\[v_{\ell_7}v_{r_7}, v_{\ell_{14}}v_{r_{14}}, v_0v_{\ell_7}, v_0v_{\ell_{14}},v_0v_{r_7}, v_0v_{r_{14}}.\]

\noindent   See Figure \ref{fin}.  Let $H'$ be the plane graph on the vertex set $\{v_0,v_{\ell_{14}},v_{r_{14}},v_{r_7},v_{\ell_7}\}$ and the six edges listed above.   By construction of $M''$, we know that there are at least 12 vertices of $V(G)$ inside $F$.   Since $|F| = 6$, we can apply Lemma \ref{key} to find a $4$-face inside of $F$.  By repeating this argument for each consecutive pair of edges in the matching $M''$ with respect to the partial order $\prec^{\ast}$, we obtain $\Omega(n^{1/3})$ pairwise disjoint 4-faces.   \end{proof}

\begin{figure}[h]
\centering
\includegraphics[width=4cm]{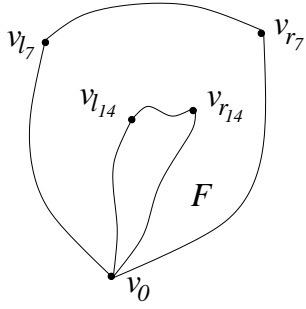}
\caption{Face $F$ of size 6.}\label{fin}
\end{figure}

\section{$\mathbb{Z}_2$-cycles in topological graphs}\label{last}

Now we pass to a variant of Heilbronn's triangle problem for not necessarily simple topological graphs.  Specifically, if $\gamma$ is piece-wise smooth closed curve with transverse self intersections, then one can consider Claim~\ref{parity} as a definition of the $\mathbb Z_2$-inside of $\gamma$.  That is, $p$ is in the interior of $\gamma$ if any arc with one endpoint at $p$ and the other outside a large disk containing $\gamma$ intersects $\gamma$ an odd number of times at proper crossings.

Does every complete topological graph drawn inside the unit square contain a cycle whose $\mathbb Z_2$-inside has area $o(1)$?  More generally, one can ask this question for the group of $\mathbb Z_2$-cycles of the complete graph instead of graph cycles as we explain below.

Before going into the technical details, let us try to give some context.  For $p$ prime and $p+2=3k$, the topological Tverberg theorem \cite{Tverberg} implies that for any drawing of the complete graph $K_{p+2}$, the vertex set can be partitioned into $k$ triples of vertices such that the corresponding triangles intersect in the plane.  

Looking for disjoint faces, as in Theorem \ref{main}, can be viewed as the dual to the topological Tverberg theorem. One looks for disjoint triangles instead of intersecting triangles, but evidently, one needs some conditions on the drawing for the conclusion of Theorem \ref{main} to hold.  In the face-wise linear case, Tverberg's theorem implies B\`ar\`any's overlap Theorem \cite{barany} (also known as the first selection lemma). Gromov \cite{gromov} showed that this theorem holds for continuous maps. In particular, in any drawing of the complete graph, there exists a point in $\mathbb R^2$ contained in at least $\frac{2}{9}$ of all the triangles. This is somehow dual to finding a cycle with small area. 

\subsubsection{Chain complexes}
Let us recall the basic objects of cellular homology, refer to \cite{homology} for a gentle introduction.  
If $X$ is a cell complex for each $i$ let $C_i(X,\mathbb Z_2)$ be the group of formal linear combinations of the $i$-dimensional cells. An element of $C_i(X,\mathbb Z_2)$ has the form $\sum_{\sigma \in F_i(X)} a_\sigma \sigma$, 
where $\sigma$ is an element of $F_i$, the set of $i$-dimensional faces, and $a_\sigma$ is an element of $\mathbb Z_2$ the field with two elements. In what follows, we will consider $K_n$ as a simplicial complex, in other words, $F_1(K_n)$ is the set of edges and $F_0(K_n)$ is the set of vertices of the complete graph.  

The boundary  $\partial \colon C_1(K_n,\mathbb Z_2) \to C_0(K_n,\mathbb Z_2)$ is a linear map between these abelian groups defined as follows: if $e=(i,j)$ is an edge, then the chain $1 e$ is mapped to $\partial(e)=1i+1j$. The kernel of $\partial$ is the group of $1$-cycles of $K_n$, $Z_1(K_n):=\ker \partial$.

Notice that elements in $Z_1$ can be identified with (possibly disjoint) graphs in which every vertex has even degree.

If $G$ is a complete topological graph, it induces a cell decomposition of a topological disk in the plane.  Let us consider the plane graph induced by $G$ by introducing a vertex at every intersection and let $\hat{G}$ be the cell decomposition of the smallest closed topological disk that contains $G$ in which every intersection is a vertex, consecutive vertices share an edge and the regions of $\mathbb R^2 \setminus G$ are the $2$ dimensional cells. We denote this cell decomposition by $\hat{G}$. We consider the chain groups $C_i(\hat{G},\mathbb Z_2)$, and observe that for $i=0,1$ there exists chain maps $f_i\colon C_i(K_n,\mathbb Z_2) \to C_i(\hat{G},\mathbb Z_2)$. For example, for a given edge $e \in E(K_n)$, $f_1(e)$ is the linear combination of the edges in $G$ that support the arc representing $e$, and similarly, for the vertices.

It is not hard to see that this chain map induces a map $f_1\colon Z_1(K_n) \to Z_1(\hat{G})$ between cycle groups. Now, since the homology group $H_1(\hat{G},\mathbb Z_2)$ is trivial, for any cycle $z \in Z_1(\hat{G})$ there exists a $2$ chain $c \in C_2(\hat{G},\mathbb Z_2)$ such that $\partial(c)=z$.
On the other hand, if some other chain $c'$ satisfies $\partial c'=z$, then $\partial(c+c')=0$, hence $c+c'$ would be a two dimensional cycle, and since there are no $3$ dimensional faces, this would imply that the homology group $H_2(\hat{G},\mathbb Z_2) \neq 0$, which is absurd. So the chain $c$ such that $\partial c=z$ is uniquely defined and its interior corresponds to the set of points $\{p\in \mathbb R^2: lk_2(p,z)=1\}$. 

\subsection{A topological graph without $\mathbb Z_2$-cycles of small area.}

\begin{proposition}\label{mod2} There exists a drawing of the complete graph inside $[0,1]^2$ such that the $\mathbb Z_2$-inside of every $\mathbb Z_2$-cycle of the complete graph has area at least $\frac{1}{4}$.\end{proposition}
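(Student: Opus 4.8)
The plan is to reduce the problem to the existence of a good binary linear code, using the correspondence between $\mathbb{Z}_2$-cycles and cellular chains sketched above. Fix a small constant $\varepsilon'\in(0,\tfrac12)$ and a large integer $m=m(n)$. Partition most of $[0,1]^2$ into $m$ pairwise disjoint closed squares $R_1,\dots,R_m$, each of area $(1-\varepsilon')/m$, arranged so that $\Omega:=[0,1]^2\setminus\bigcup_{i}R_i$ is connected (a fine grid of equal squares, each shrunk slightly, works, with $\Omega$ the resulting network of corridors of total area $\varepsilon'$). Place all $n$ vertices of $K_n$ inside a tiny disk $D_0\subset\Omega$ near the center, and for each edge $e=(v_a,v_b)$ let $\rho_e\subset D_0$ be the segment $v_av_b$. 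Given any assignment of vectors $w(e)=(w_1(e),\dots,w_m(e))\in\mathbb{Z}_2^{m}$ to the edges, I would draw the arc representing $e$ as follows: take a simple closed curve $J_e\subset\Omega$ obtained by joining small loops around exactly the squares $\{R_i:w_i(e)=1\}$ by thin arcs running through the corridors, route one of those arcs so that $J_e$ passes next to $v_a$ and $v_b$, delete a tiny sub-arc of $J_e$ between these two points, and splice in two short disjoint tails to $v_a$ and $v_b$. The result is a bona fide drawing of $K_n$ in $[0,1]^2$ (highly non-simple, but that is allowed) realizing an arbitrary matrix $W=(w_i(e))$.

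The crux is the following linking computation: for every $\mathbb{Z}_2$-cycle $z=\sum_e a_e e\in Z_1(K_n,\mathbb{Z}_2)$ and every point $p$ in the interior of a square $R_i$,
\[lk_2\bigl(p,\,f_1(z)\bigr)=\sum_{e:\,a_e=1}w_i(e)=(Wz)_i.\]
Indeed, writing $\ell_e:=\mathrm{arc}(e)\cup\rho_e$, this closed curve is homotopic in $\Omega$ to $J_e$, so $lk_2(p,\ell_e)=lk_2(p,J_e)=w_i(e)$ by construction; and $f_1(z)=\sum_{e:\,a_e=1}\ell_e+\sum_{e:\,a_e=1}\rho_e$ as $\mathbb{Z}_2$-$1$-chains, where the last sum is a $1$-cycle supported in the contractible disk $D_0$ and hence has $lk_2(p,\cdot)=0$. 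Consequently the $\mathbb{Z}_2$-inside of $z$ contains $\bigcup_{i:\,(Wz)_i=1}R_i$, so its area is at least $\mathrm{wt}(Wz)\cdot(1-\varepsilon')/m$, where $\mathrm{wt}$ denotes Hamming weight.

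It remains to choose $W$ so that $\mathrm{wt}(Wz)\ge \tfrac{m}{4(1-\varepsilon')}$ for all nonzero $z\in Z_1(K_n,\mathbb{Z}_2)$; since $\dim_{\mathbb{Z}_2}Z_1(K_n,\mathbb{Z}_2)=\binom{n-1}{2}$ (with the triangles through a fixed vertex as a basis), this is precisely the existence of a binary linear code of length $m$, dimension $\binom{n-1}{2}$ and relative minimum distance at least $\tfrac{1}{4(1-\varepsilon')}<\tfrac12$. Such a code exists once $m\ge C_{\varepsilon'}\binom{n-1}{2}$, either by the Gilbert–Varshamov bound or by taking $W$ to be a uniformly random $\mathbb{Z}_2$-matrix together with a union bound over the $2^{\binom{n-1}{2}}$ nonzero cycles. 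Taking its generator matrix (read in the triangle basis and extended arbitrarily to all of $\mathbb{Z}_2^{E(K_n)}$) as $W$, every nonzero $\mathbb{Z}_2$-cycle of $K_n$ then has $\mathbb{Z}_2$-inside of area at least $\tfrac{m}{4(1-\varepsilon')}\cdot\tfrac{1-\varepsilon'}{m}=\tfrac14$.

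The step I expect to require the most care is the realization in the first paragraph: one must verify that an arbitrary matrix $W$ is realizable by a genuine topological drawing in which each edge is a non-self-intersecting arc missing all other vertices — in particular one cannot simply ``go out, encircle some squares, and retrace the path back'', and instead must thread the encircling curves through the corridor network $\Omega$ without self-crossings and then splice them to the vertices inside $D_0$. The homological bookkeeping in the second paragraph (that $D_0$ contributes nothing) is routine given Claim \ref{parity} and the description of $\hat G$ above, and the coding-theoretic input is used as a black box.
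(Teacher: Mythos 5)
Your argument is correct, and recasting the construction as a coding-theory problem is a clean way to isolate what the paper's proof actually uses. The paper's construction likewise partitions (a rectangle that rescales to) $[0,1]^2$ into $m$ equal-area cells in a single row, assigns to each edge $e$ and cell index $i$ a bit $w_i(e)$ (recording whether the random lattice-path arc drawn for $e$ passes along the top or the bottom of that cell), and observes via Claim~\ref{parity} that cell $i$ lies in the $\mathbb Z_2$-inside of a cycle $z$ exactly when $(Wz)_i=1$, so the area is $\tfrac1m\,\mathrm{wt}(Wz)$ up to an arbitrarily small perturbation error. It then takes $W$ uniformly at random and applies a Chernoff bound plus a union bound over the $2^{\binom{n-1}{2}}-1$ nonzero cycles, which is precisely the random-coding proof of the Gilbert--Varshamov bound that you also cite; so the combinatorial core and the quantitative parameters are essentially identical. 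What your formulation buys is the explicit decoupling of the topological realization from the choice of $W$: once one knows that any prescribed matrix $W$ is realizable, substituting an explicit $[m,\binom{n-1}{2},\geq m/4]$ code yields a fully deterministic drawing, which the paper's random construction does not directly provide. Conversely, the paper's realization is simpler than yours: with all cells in a single row and each arc a monotone-in-$x$ lattice path plus a return arc hugging the bottom edge, the arcs are manifestly simple and no encircling, tubing, or splicing is needed. Your grid-and-corridor realization also works, but the informal ``joining loops by thin arcs'' must be made precise as doubled arcs (thin tubes) arranged along a spanning tree inside $\Omega$, so that each $J_e$ is a genuine Jordan curve whose bounded side meets the squares exactly as $W$ dictates --- which is exactly the point you flag at the end as the one requiring care.
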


We begin describing the random construction. Consider a rectangle of size $(m+1) \times 1$, with corners at $\{(0,0),(0,1),(m,0),(m,1)$ where $m$ will be a very large number with respect to $n$ that we will define later on. We perform the area analysis for this drawing, but notice that by applying the linear transformation $(x,y)\to (\frac{x}{m},y)$, we can transform it back to the unit square.

We place all the points in general position on a small neighbourhood of the lower corner (0,0) of the rectangle. The drawing will be random and at the end it will be perturbed by a arbitrary small amount so that it is in general position. To refer to this small perturbation we use the word near in the description below. Notice that one could perturb each edge so that it stays piece-wise linear or one could smooth each edge as long as areas of cycles do not change too much and every intersection is a proper crossing (in the language of differentiable topology this corresponds to the curves being transverse and in PL topology to general position).

Each edge will go all the way to near $(m,0)$ and come back near $(0,0)$. Choose two vertices $i,j$, the edge $e=(i,j)$ will be represented by an arc that begins at the vertex $i$ and is a concatenation of almost vertical and almost horizontal arcs. More specifically, for each $k \in \{1,2,\ldots m-1\}$ assume that we have constructed a path $\alpha_{ij}(k)$ that begins at $i$ (near $(0,0)$) and ends at $(k,y_k)$ with $y_k \in \{0,1\}$, let $Y_{k+1}$ be a Bernoulli random variable with probability $\frac{1}{2}$, and extend the arc $\alpha_{ij}(k)$ by concatenating it with the segment  $\{(k+t,y_k): t \in [0,1]\}$ if $Y_{k+1}=y_k$, and by the concatenation of the segments $\{(k,t): t \in [0,1]\}$ followed by $\{(k+t,Y_{k+1}): t \in [0,1]\}$ if $Y_{k+1}\neq y_k$.

When we reach $x=m$, if $y=1$, we concatenate down to $(m,0)$. In both cases $y=0,1$, we end the arc by concatenating all the way back to the vertex $j$ near $(0,0)$ with a long near horizontal arc close to the $x$-axis. Finally, we perturb what we have constructed a very small amount so that the intersections between any two such edges is a finite set of points where they cross properly, and we re-scale the $x$-axis so that the whole picture is contained in the unit square.

\begin{figure}\label{2cycles}
\centering
\begin{tikzpicture}

	\draw  plot  coordinates{(0,0)(2 ,0 )(2,1)(3,1)(3,0)(4,0)(6,0)(6,1)(8,1)(8,-1/10)(1/10,-1/10)(1/10,2/10)};
	\filldraw [black] (0,0) circle [radius=1pt];
	\filldraw [black] (1/10,2/10) circle [radius=1pt];

\end{tikzpicture}

 \caption{A possible edge in the construction of proposition \ref{mod2} before re-scaling and perturbing.}
\end{figure}
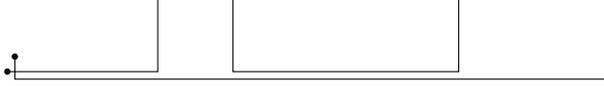

\begin{proof}[Proof of Proposition \ref{mod2}]
We work with the rectangle and make some observations about the re-scaling and perturbing at the end of the proof.  Using the random construction described above, to compute the expected area of a cycle $z$, consider a point $p$ in the interior of the rectangle, say that $p$ has coordinates $(k+\frac{1}{2},\frac{1}{2})$ and one of the segments of the $1$-cycle that joins $(k,Y_k)$ with $(k+1,Y_{k+1})$. The vertical ray emanating up from $p$, denoted by $\alpha$, intersects the edge with probability $\frac{1}{2}$. Hence, conditioned on all the edges of $z$ but one, the square $\{(k,0),(k,1),(k+1,1),(k+1,0)\}$ is completely inside $z$ with probability $\frac{1}{2}$ and completely outside $z$ with probability $\frac{1}{2}$. This implies that the area of every cycle is (up to an arbitrarily small error term from the perturbation) a sum of $m$ independent Bernoulli random variables with probability $\frac{1}{2}$. We have $\mathbb E[\area(z)]\geq \frac{m}{2}$, and Chernoff bound yields: 
\[\pr(\area(\gamma)< \frac{m}{3})\leq e^{-\frac{m}{128}}.\] 
There are exactly $2^{{n-1 \choose 2}}-1$ different non-zero elements in $Z_1(K_n)$, while the areas of two different cycles $z$ and $z'$ that share some edge are dependent random variables, it suffices to set $m=64 n^2$ so that the union bound yields
\[\pr(\exists z \in Z_1(K_n), \area(z)<\frac{m}{3})\leq  2^{-\frac{n}{2}}\]

Since this probability is strictly smaller than $1$, there exists some drawing such that the area of every cycle is at least $\frac{m}{3}$, which after perturbing and re-scaling by $\frac{1}{m}$, corresponds to all cycles having area at least $\frac{1}{3}-\epsilon$, for any given $\epsilon>0$.
\end{proof}

\begin{remark}
 If we only cared about graph cycles, i.e. subgraphs of $K_n$ in which each vertex has degree two, then it is enough to take $m=O(n \log n)$.
\end{remark}
\begin{remark}
There is nothing especial about $\frac{1}{4}$ or about $\frac{1}{3}$ here, at the cost of making $m$ larger, we can force all cycles to have area $<\frac{1}{2}-\epsilon$ for any $\epsilon>0$, and it is easy to see that for any complete topological graph there exists $z \in Z_1(G)$ with $\area(z)\leq \frac{1}{2}$.
\end{remark} 

\begin{problem}
The construction described above has $\Omega(n^6)$ crossings. Does there exists a cycle of small area in every drawing of $K_n$ such that every pair of edges intersect a constant number of times?
\end{problem}

\end{document}